\newcommand{\E}{\operatorname{E}}
\newcommand{\hE}{\widehat{\E}}
\newcommand{\Var}{\operatorname{Var}}
\newcommand{\N}{\mathbb{N}}
\newcommand{\R}{\mathbb{R}}
\newcommand{\bs}[1]{\bm{#1}}
\newcommand{\We}{\textrm{We}}
\newcommand{\Id}{\textrm{\bf{Id}}}
\newcommand{\U}{\bs{U}}
\newcommand{\F}{\bs{F}}
\newcommand{\Y}{\bs{Y}}
\newcommand{\Z}{\bs{Z}}
\newcommand{\ab}{\bs{a}}
\newcommand{\bb}{\bs{b}}
\newcommand{\x}{\bs{x}}
\newcommand{\bbf}{\bs{f}}
\newcommand{\W}{\bs{W}}
\newcommand{\X}{\bs{X}}
\newcommand{\dt}{\delta t}
\newcommand{\Dt}{\Delta t}
\newcommand{\mcX}{\mathcal{X}}
\newcommand{\mcY}{\mathcal{Y}}
\newcommand{\mcZ}{\mathcal{Z}}
\newcommand{\bX}{\bs{\mcX}}
\newcommand{\bY}{\bs{\mcY}}
\newcommand{\bZ}{\bs{\mcZ}}
\newcommand{\Q}{\mathcal{R}}
\newcommand{\Hc}{\mathcal{H}}
\newcommand{\I}{I}
\newcommand{\IDt}{\I^{\Dt}}
\newcommand{\os}{\phi}
\newcommand{\tRos}{\tilde{R}_{\os}}
\newcommand{\Ros}{{R}_{\os}}
\newcommand{\tSos}{\tilde{S}_{\os}}
\newcommand{\hRos}{\hat{R}_{\os}}
\newcommand{\hSos}{\hat{S}_{\os}}
\newcommand{\pe}{p_e}
\newcommand{\pos}{p_{\os}}
\newcommand{\eind}{j}
\newcommand{\Eind}{J}
\newcommand{\tind}{n}
\newcommand{\Tind}{N}
\newcommand{\fp}{\gamma}
\newcommand{\PO}{\mathcal{P}}
\newcommand{\tu}{\tilde{g}}
\newcommand{\tC}{\tilde{C}}
\providecommand{\id}{1\kern-0.25em{\rm l}}
\newtheorem{theorem}{Theorem}[section]
\newaliascnt{definition}{theorem}
\newaliascnt{corollary}{theorem}
\newaliascnt{conjecture}{theorem}
\newaliascnt{remark}{theorem}
\newaliascnt{algorithm}{theorem}
\newaliascnt{example}{theorem}
\newaliascnt{lemma}{theorem}
\theoremstyle{plain}
\newtheorem{cor}[corollary]{Corollary}
\newtheorem{lem}[lemma]{Lemma}
\theoremstyle{definition}
\newtheorem{defi}[definition]{Definition}
\newtheorem{conjecture}[conjecture]{Conjecture}
\newtheorem{remark}[remark]{Remark}
\newtheorem{algorithm}[algorithm]{Algorithm}
\newtheorem{example}[example]{Example}
\crefname{subsection}{Subsection}{Subsections}\Crefname{subsection}{Subsection}{Subsections}
\crefname{equation}{equation}{equations}\Crefname{equation}{Equation}{Equations}
\crefname{algorithm}{Algorithm}{Algorithms}\Crefname{algorithm}{Algorithm}{Algorithms}
\DeclareMathOperator*{\argmin}{{\rm argmin}}
\begin{document}
\title[Accelerated micro/macro Monte Carlo simulation of SDEs]{A micro/macro algorithm to accelerate Monte Carlo simulation of stochastic differential equations}
\thanks{This work was supported by the Research Council of the K.U.\ Leuven through grant OT/09/27 and by the Interuniversity Attraction Poles Programme of the Belgian Science Policy Office through grant IUAP/V/22. The scientific responsibility rests with its authors.}
\author{Kristian Debrabant}\address{Department of Mathematics and Computer Science, University of Southern Denmark, Campusvej 55, 5230 Odense M, Denmark ({\tt debrabant@imada.sdu.dk}). Part of this research was performed while KD being a Postdoctoral Research Fellow at the Department of Computer Science, K.\,U. Leuven.}
\author{Giovanni Samaey}\address{Department of Computer Science, K.\,U. Leuven, Celestijnenlaan 200A, 3001 Heverlee, Belgium ({\tt Giovanni.Samaey@cs.kuleuven.be}). GS is a Postdoctoral Fellow of the Research Foundation -- Flanders (FWO).}
\begin{abstract}
We present and analyze a micro/macro acceleration technique for the Monte Carlo simulation of stochastic differential equations (SDEs) in which there is a separation between the (fast) time-scale on which individual trajectories of the SDE need to be simulated and the (slow) time-scale on which we want to observe the (macroscopic) function of interest. The method performs short bursts of microscopic simulation using an ensemble of SDE realizations, after which the ensemble is restricted to a number of macroscopic state variables.  The resulting macroscopic state is then extrapolated forward in time and the ensemble is projected onto the extrapolated macroscopic state.  We provide a first analysis of its convergence in terms of extrapolation time step and number of macroscopic state variables. The effects of the different approximations on the resulting error are illustrated via numerical experiments.
\end{abstract}


\subjclass{65C30, 60H35, 65C05, 65C20, 68U20}

\keywords{
accelerated Monte Carlo simulation, micro/macro simulation, multiscale simulation, weak approximation, stochastic differential equations
}

\maketitle


\section{Introduction}
In many applications, one considers a process that is modeled as a stochastic  differential equation (SDE), while one is ultimately interested in the time evolution of the expectation of a certain function of the state, i.\,e., in weak approximation.  Consider, for instance, the micro/macro simulation of dilute solutions of polymers \cite{Laso:1993p10000}, which will be the motivating example in this paper.   Here, an SDE models the evolution of the configuration of an individual polymer driven by the flow field, and the function of interest is a non-Newtonian stress tensor (the expectation of a function of the polymer configuration). For this type of problem, one often resorts to Monte Carlo simulation \cite{caflisch98mca}, i.\,e., the simulation of a large ensemble of realizations of the SDE, combined with ensemble averaging to obtain an approximation of the quantity of interest at the desired moments in time. For concreteness, we introduce the SDE
\begin{equation}\label{eq:SDE}
d\X(t)=\ab\big(t,\X(t)\big)~dt+\bb\big(t,\X(t)\big)\star d\W(t),\quad t \in\I:=[t^0,T],\quad \X(t^0)=\X_0,
\end{equation}
in which $\ab:\I\times\R^d\to\R^d$ is the drift, $\bb:I\times\R^d\to\R^{d\times m}$ is the diffusion,
and $\W(t)$ is an $m$-dimensional Wiener process. The initial value $\X_0$ is independent of $\W$ and follows some known distribution with density $\varphi_0(\x)$. As usual, \eqref{eq:SDE} is an abbreviation of the integral form
\[
\X(t)=\X_0+\int_{t^0}^t\ab\big(s,\X(s)\big)~ds+\int_{t^0}^t\bb\big(s,\X(s)\big)\star d\W(s),\quad t \in\I.
\]
The integral with respect to  $\W$ can be interpreted, e.\,g., as an It\^{o} integral with $\star\; d\W(s) = d\W(s)$ or as a Stratonovich integral with $\star\; d\W(s) = \circ \;d\W(s)$.
The function of interest for the Monte Carlo simulation is defined as the expectation $\E$ of a function $\bbf\big(\X(t)\big)$,
\[
\bar{\bbf}(t)=\E\bbf\big(\X(t)\big).
\]

The numerical properties of Monte Carlo simulations have been analyzed extensively in the literature. We mention studies on the order of weak convergence of explicit \cite{milstein95nio,kloeden99nso,komori07wso,roessler07sor,debrabant09foe,debrabant10rkm} and implicit \cite{kloeden99nso,komori08wfo,debrabant09ddi,debrabant08bao,debrabant11bao} time discretizations of SDE \eqref{eq:SDE}, the investigation of stability  \cite{hernandez92aso,hernandez93cas,saito96sao,higham00msa,tocino05mss}, and techniques for variance reduction \cite{newton94vrf,giles08mmc,giles08imm}. For more references, we refer to \cite{kloeden99nso}.
Also for strong approximation, there has been a growing interest in the study of numerical methods for stiff SDEs \cite{kloeden99nso,milstein98bim,burrage01sar,tian01itm,burrage02pcm,tian02tss,milstein03nmf,burrage04isr,abdulle08src,abdulle08srm}.

In this paper, we present and analyze a \emph{micro/macro} acceleration technique for the Monte Carlo simulation of SDEs of the type \eqref{eq:SDE} in which there exists a time-scale separation between the (fast) time-scale on which individual trajectories of the SDE need to be simulated and the (slow) time-scale on which the function $\bar{\bbf}(t)$ evolves. The proposed method is motivated by the development of recent generic multiscale techniques, such as \emph{equation-free} \cite{Kevrekidis:2009p7484,KevrGearHymKevrRunTheo03} and \emph{heterogeneous multiscale} methods \cite{EEng03,E:2007p3747}. We use the simulation of a dilute polymer solution as an illustrative example. The \emph{microscopic} level is defined via an ensemble $\bX\equiv (\X_\eind)_{\eind=1}^\Eind$ of $\Eind$ realizations evolving according to \cref{eq:SDE}; the \emph{macroscopic} level will be defined by a set of $L$ \emph{macroscopic state variables} $\U\equiv (U_l)_{l=1}^L$, with $U_l(t)=\E u_l(\X(t))$, for some appropriately chosen functions $u_l$.
The method exploits a separation in time scales by combining short bursts of \emph{microscopic} simulation with the SDE \eqref{eq:SDE} with a \emph{macroscopic} extrapolation step, in which only the macroscopic state $\U$ is extrapolated forward in time.  One time step of the algorithm can  be written as follows: (1) microscopic \emph{simulation} of the ensemble using the SDE \eqref{eq:SDE}; (2) \emph{restriction}, i.\,e., extraction of (an estimate of) the macroscopic state (or macroscopic time derivative); (3) forward in time \emph{extrapolation} of the macroscopic state; and (4) \emph{matching} of the ensemble that was available at the end of the microscopic simulation with the extrapolated macroscopic state.
Remark that the resulting method is fully explicit as soon as the microscopic simulation is explicit, and that the method can readily be implemented as a  higher order method by an appropriate choice of the extrapolation.

The main contributions of the present paper are the following:
\begin{itemize}
\item From a numerical analysis viewpoint, we study convergence
 of the proposed micro/macro acceleration method in the absence of statistical error.  Specifically, we discuss how the error that is introduced during the matching depends on the number of macroscopic state variables, and how the deterministic error depends on the extrapolation time step. Additionally, we also comment on the effects of the extrapolation on the statistical error. Finally, we give a general convergence result.
\item From a practical viewpoint, we provide numerical results for a nontrivial test case, showing the interplay between the different sources of numerical error.  We illustrate the effects of the choice of macroscopic state variables, as well as the dependence of the numerical error on the chosen extrapolation strategy.
\end{itemize}
A stability analysis of the proposed method will be given in a separate publication.

The remainder of the paper is organized as follows. In
\cref{sec:preliminaries}, we first introduce the mathematical setting of the
paper. We discuss the necessary assumptions on the SDE \eqref{eq:SDE} for our
Monte Carlo setting, and introduce the illustrative example that will be used
for the numerical experiments.
In \cref{subsec:new-algo}, we
propose the algorithm that will be the focus of this paper.
\Cref{sec:proj} provides some results on the matching operator, whereas the
extrapolation operator is discussed in \cref{subsec:accel-extrap}. We provide
a general convergence result in \cref{sec:conv}. \Cref{sec:numerical} provides
numerical illustrations, which are chosen to illuminate the properties of the
proposed method. We conclude in \cref{sec:concl}, where we also outline some
directions for future research.

\section{Mathematical setting\label{sec:preliminaries}}

In this section, we introduce in detail the notations that we will use (\cref{sec:math-set}), as well as the illustrative example that will be considered throughout the paper (\cref{sec:model}).

\subsection{Notations\label{sec:math-set}}

We first define the appropriate function spaces. Let $C_P^r(\R^d, \R)$ denote the space of all $g \in C^r(\R^d,\R)$ fulfilling that there are constants $\tilde{C}>0$ and $\kappa>0$ such that $|\partial^i_{\x} g(\x)|\leq\tilde{C}(1+\|\x\|^\kappa)$ for any partial derivative of order $i \leq r$ and all $\x\in\R^d$. Further, let $g \in C_P^{q,r}(\I\times\R^d, \R)$ if $g(\cdot,\x) \in C^{q}(\I,\R)$, $g(t,\cdot) \in C^r(\R^d, \R)$ for all $t \in\I$ and $\x \in \R^d$, and $|\partial^i_{\x}g(s,\x)|\leq\tilde{C}(1+\|\x\|^\kappa)$ holds for $0\leq i\leq r$ uniformly with respect to $s\in[t^0,t]$ and for all $\x\in\R^d$ \cite{kloeden99nso,milstein95nio}.

We consider the SDE \eqref{eq:SDE}.
Besides the exact solution $\X(t)$ of SDE \eqref{eq:SDE} starting from $\X(t^0)=\X_0$, we also introduce the exact solution of an auxiliary initial value problem for the SDE: the solution of the SDE starting from an initial value $\Z$ at time $t$ will be denoted as $\X^{t,\Z}$. With this notation, we state the following definition:
\begin{defi}[Uniform weak continuity of the SDE]\label{def:weakcontinuitySDE}
Consider a class of random variables. The
SDE \eqref{eq:SDE} is called uniformly weakly continuous for this class if, for all $g \in C_P^{0,2}(I\times\R^d, \R)$, there exist constants $\Dt_0>0$ and $C$ such that
\begin{equation}
|\E g\big(s,\X^{t,\Z}(t+\Dt)\big)-\E g(s,\Z)|\leq C\Dt
\end{equation}
holds for all initial values $\Z$ in the considered class, $\Dt\in[0,\Dt_0]$, and all $t\in[t^0,T-\Dt]$, $s\in\I$.
\end{defi}

Next, we discretize \cref{eq:SDE} in time with step size $\dt$ and denote the numerical approximation $\Y^{k}=\Y(t^k)\approx \X(t^k)$ with $t^k=t^0+k\dt$. Leaving the concrete choice of discretization undecided for now, we introduce a short-hand notation for an abstract one step discretization scheme,
\begin{equation}\label{eq:sde_discr}
	\Y^{k+1}=\os(t^k,\Y^k;\dt), \qquad \Y^0=\X_0, \qquad k\ge 0.
\end{equation}
In the Monte Carlo setting, we are interested in weak approximation of the SDE \eqref{eq:SDE}. We define the weak order of consistency of the discretization $\os$ as follows (compare \cite{milstein95nio}):
\begin{defi}[Weak order of consistency of SDE discretization]
Assume that for all $g \in C_P^{0,2(\pos+1)}(I\times\R^d, \R)$ there exists a $C_g\in C_P^0(\R^d,\R)$ such that
\begin{equation*}
|\E g\big(s,\os(t,\Z;\dt)\big)-\E g\big(s,\X^{t,\Z}(t+\dt)\big)| \leq C_g(\Z) \,\dt^{\pos+1}
\end{equation*}
is valid for $\Z \in \R^d$, $s\in\I$, and $t$, \mbox{$t+\dt \in [t^0,T]$}. Then the one step method $\os$ is called weakly consistent of order $\pos$.
\end{defi}

Since we can only simulate a finite number of realizations of approximations of \eqref{eq:SDE} (via its discretization \eqref{eq:sde_discr}), we also need to approximate the expectation $\E$ by an empirical mean $\hE$. Using the ensemble $\bY\equiv (\Y_\eind)_{\eind=1}^\Eind$ of realizations of \eqref{eq:sde_discr}, the empirical mean $\hE$ is  defined as
\[
\hE \bbf(\bY)=\frac{1}{\Eind}\sum_{\eind=1}^\Eind \bbf(\Y_\eind).
\]
The numerical integration scheme for the ensemble $\bY$ will be denoted as
\begin{equation}\label{eq:sde_discr_ens}
	\bY^{k+1}=\os_{\bY}(t^k,\bY^k;\dt)=\left(\os(t^k,\bY^k_\eind;\dt)\right)_{\eind=1}^\Eind.
\end{equation}

The total error of a Monte Carlo simulation of \eqref{eq:SDE} consists of a \emph{deterministic} error due to the time discretization, and a \emph{statistical} error due to the finite number of realizations. In general, for a given tolerance, a Monte Carlo simulation will be most efficient if the statistical and deterministic error are balanced.  However, as for stiff systems of ODEs, one might encounter situations in which the time step $\dt$  cannot be increased because of stability problems. The acceleration method that we will propose in \cref{subsec:new-algo} will prove to be particularly useful to accelerate Monte Carlo simulations in such situations.

\begin{remark}[Probability density functions]\label{rem:FokkerPlanck}
One can equivalently describe the process \eqref{eq:SDE} via an advection-diffusion equation, also known as Fokker--Planck equation (see, e.\,g., \cite{risken1996fokker}), which in the It\^{o} case takes the form
\begin{equation}
	\partial_t\varphi=-\nabla \left(\ab\;\varphi\right)
	+ \frac{1}{2}\nabla\cdot\left[\nabla\cdot\left(\bb^T\bb\;\varphi\right)\right],
\end{equation}
and which describes the evolution of the probability density function $\varphi(t,\x)$ of $\X(t)$, starting from the initial density $\varphi(t^0,\x)=\varphi_0(\x)$.  The functional of interest then becomes
\begin{equation}\label{eq:intexp}
\bar{\bbf}(t)=\int \bbf(\x) \varphi(t,\x)d\x.
\end{equation}
For simulation purposes, however, the Monte Carlo algorithm is generally preferred, due to the possibly high number of dimensions of the Fokker--Planck equation.
\end{remark}
\begin{remark}[Spatial dimension] While the model \eqref{eq:SDE} can have arbitrary dimension, we will consider a one-dimensional version in the numerical illustrations for ease of visualization. In that case the state vector reduces to a scalar, $X(t)$. Whenever we consider the one-dimensional case, all bold typesetting in \cref{eq:SDE} will be removed.
\end{remark}

\subsection{A motivating model problem: FENE dumbbells\label{sec:model}}

To illustrate the behavior of the proposed numerical methods, we will consider the micro/macro simulation of the evolution of immersed polymers in a solvent. Here, one models the evolution of the configuration of a polymer ensemble via an SDE of the type \eqref{eq:SDE}, driven by the flow field, for each of the individual polymers. This results in a polymer configuration distribution at each spatial point. This microscopic model is coupled to a Navier--Stokes equation for the solvent, in which the effect of the immersed polymers is taken into account via a non-Newtonian stress tensor.  We refer to~\cite{Hulsen:1997p7027,Laso:1993p10000,le-bris-lelievre-09} for an introduction to the literature on this subject.

In this paper, we consider only the Monte Carlo simulation of the microscopic model, leaving the coupling with the Navier--Stokes equations for future work.  We eliminate the spatial dependence by considering the microscopic equations along the characteristics of the flow field, i.\,e., in a Lagrangian frame.  In general, a microscopic model describes an individual polymer as a series of beads, connected by nonlinear springs, resulting in a coupled system of SDEs for the position of each of the beads. In the simplest case, that we will also use as an illustrative example here, one represents the polymers as non-interacting dumbbells, connecting two beads by a spring that models intramolecular interaction. The state of the polymer chain is described by the end-to-end vector $\X(t)$ that connects both beads, and whose evolution is modelled using the non-dimensionalised SDE
\begin{equation}
d\X(t)=\left[\boldsymbol{\kappa}(t) \, \X(t)-\dfrac{1}{2\We}\F\big(\X(t)\big)\right]dt + \dfrac{1}{\sqrt{\We}}d\W(t), \label{eq:fene-3d}
\end{equation}
where $\boldsymbol{\kappa}(t)$ is the velocity gradient of the solvent, $\We$ is the Weissenberg number, and $\F$ is an entropic force, here considered to be finitely extensible nonlinearly elastic (FENE),
\begin{equation}\label{eq:springs}
	\F(\X)=\dfrac{\X}{1-\|\X\|^2/\fp},
\end{equation}
with $\fp$ a non-dimensional parameter that is related to the maximal polymer length. The resulting non-Newtonian stress tensor is given by the Kramers' expression,
\begin{equation}
\bs{\tau}_p(t) = \dfrac{\epsilon}{\We} \bigg( \E\Big( \X(t)\otimes \F\big(\X(t)\big)\Big)-\Id \bigg), \label{eq:kramers}
\end{equation}
in which $\epsilon$ represents the ratio of polymer and total viscosity; see \cite{le-bris-lelievre-09} for details and further references.
This model, which is of the type \eqref{eq:SDE}, takes into account Stokes drag (due to the solvent velocity field), intramolecular elastic forces, and Brownian motion (due to collisions with solvent molecules). The functional of interest in the Monte Carlo simulation is $\bar{\bbf}(t)=\bs{\tau}_p(t)$.

\Cref{eq:fene-3d,eq:springs} ensure that the length of the end-to-end vector, $\|\X\|$, cannot exceed the maximal value $\sqrt{\fp}$~\cite{jourdain-lelievre-02}. However, a naive explicit discretization scheme might yield polymer lengths beyond this maximal value. This can be avoided via an accept-reject strategy, see, e.\,g., \cite[Section 4.3.2]{Ott96}. Here, for each polymer, the state after each time step is rejected if the calculated polymer length exceeds $\sqrt{(1-\sqrt{\dt})\fp}$, and a new random number is tried until acceptance. To prevent the distribution of the approximation process to be heavily influenced, the microscopic time-step has to be chosen small enough.  Alternatively, one can use an implicit method \cite{Ott96}, which alleviates the time-step restriction. However, even for implicit SDE discretizations, the maximal time step is limited when coupling the Monte Carlo simulation with a discretization of the Navier--Stokes equations for the solvent. This is due to the fact that the coupling between the Monte Carlo and Navier--Stokes parts is, in most existing work, done explicitly in time, creating an additional stability constraint due to the coupling. (Some notable exceptions are given in \cite{Laso:2004p12587,Somasi:2000p12659}.)  With the algorithm that we propose, one would extrapolate both the Monte Carlo and the Navier--Stokes part of this coupled simulation simultaneously. This will be done in future work.
Here, we simply conclude that, for this model problem, the required time step for a stable SDE (or coupled) simulation may indeed be small compared to the time scale of the evolution of the stress.

As in, e.\,g., \cite{Keunings:1997p9982}, we will consider a one-dimensional version in the numerical illustrations; the stress tensor then reduces to a scalar $\tau_p(t)$. As time discretization we will use the explicit Euler--Maruyama scheme, combined with an accept-reject strategy.

\section{Micro/macro acceleration method\label{sec:accel}\label{subsec:new-algo}}

In this section, we describe the micro/macro acceleration algorithm that is the focus of the present paper.  As said above, the goal of the method is to be faster than a full microscopic simulation, while converging to a full microscopic simulation when the extrapolation time step vanishes (see \cref{sec:conv}). The algorithm combines short bursts of microscopic simulation of an ensemble $\bX$ of $J$ realizations of the SDE \eqref{eq:SDE} with a macroscopic extrapolation step. During this macroscopic extrapolation, only a set of $L$ macroscopic state variables $\U$ are extrapolated forward in time, and the microscopic ensemble then needs to be \emph{matched} onto the extrapolated macroscopic state.

\subsection{Description of algorithm}

Before giving a detailed description of the algorithm, let us first elaborate slightly on the macroscopic state variables, $\U=\left( U_l\right)_{l=1}^L$, which are defined as expectations of scalar functions $u_l$ of the state $\X$ and time $t$,
\begin{equation}\label{eq:state-vars}
U_l(t) = \E u_l\big(t,\X(t)\big).
\end{equation}
\begin{remark}[Choice of macroscopic state variables]
The choice of the functions $u_l$ is problem-dependent and will be specified  with the numerical illustrations for the examples considered in this text. For the exposition in this section, it may be helpful to think about the standard moments of the distribution in a one-dimensional setting, i.\,e., $u_l(t,x)=x^l$. Note that by allowing $u_l$ to depend directly on time $t$, centralized moments $u_1(t,x)=x$, $u_l(t,x)=(x-U_1(t))^l$ for $l\geq2$, for instance, can also be considered.
\end{remark}

We introduce two abstract operators to connect both levels of description: a
\emph{restriction} operator
\begin{equation}
\label{eq:intro_restriction}
\Q: \bY \mapsto \U=\Q(\bY),
\end{equation}
that maps a microscopic ensemble onto a macroscopic state, and a \emph{matching} operator
\begin{equation}
	\PO: \U,\bY^* \mapsto \bY=\PO(\U,\bY^*),
\end{equation}
which matches a given microscopic ensemble $\bY^*$ with an imposed macroscopic state $\U$.  Both operators will be discussed in detail in \cref{sec:match-restrict}.

We further introduce some notation.  Let ${\IDt} = \{t^0, t^1, \ldots, t^{\Tind}\}$ be a (macroscopic) time discretization  of the time interval $\I$, with $t^0 < t^1 < \ldots < t^{\Tind} =T$ and step
sizes $\Dt_{\tind} = t^{{\tind}+1}-t^{\tind}$ for ${\tind}=0,1, \ldots, \Tind-1$. 
We also introduce the discrete time instances $t^{{\tind},k}=t^{\tind}+k\dt$ that are defined on a microscopic grid, and, correspondingly, the discrete approximations $\U^{{\tind},k}\approx\U(t^{{\tind},k})$ and $\U^{{\tind}}\approx\U(t^{\tind})$, and analogously for $\bY$ and $\Y$. Clearly, $(\cdot)^{\tind}=(\cdot)^{{\tind},0}$.

One step of the micro/macro acceleration method then reads:
\begin{algorithm}[Micro/macro acceleration]\label{algo:accel}Given a microscopic state $\bY^{{\tind}}$ at time $t^\tind$, advance to a microscopic state $\bY^{{\tind}+1}$ at time $t^{\tind+1}$ via a three-step procedure:
	\begin{itemize}
	\item[(i)] \emph{Simulate} the microscopic system over $K$ time steps of size $\dt$, \[\bY^{{\tind},k}=\os_{\bY}(t^{{\tind},k-1},\bY^{{\tind},k-1};\dt), \qquad 1\le k \le K,\]
	and record the restrictions $\U^{{\tind},k}=\Q({\bY^{{\tind},k}})$, as well as an approximation of the function of interest $\hat{\bbf}^{{\tind},k}=\hE\bbf(\bY^{{\tind},k})$.
	\item[(ii)] \emph{Extrapolate} the macroscopic state  $\U$ from (some of) the time points $t^{i,k}$,$i=0,\ldots,n$, $k=1,\dots,K$, to a new macroscopic state $\U^{{\tind}+1}$ at time $t^{{\tind}+1}$,
	\begin{equation}\label{eq:extrap}
		\U^{{\tind}+1} = {\mathcal E}\left(\left(\U^{i,k}\right)_{i,k=0,0}^{n,K};(\Dt_i)_{i=0}^n,\dt\right).
	\end{equation}
	\item[(iii)] \emph{Match} the microscopic state $\bY^{{\tind},K}$ with the extrapolated macroscopic state $\bY^{{\tind}+1}=\PO(\bY^{{\tind},K},\U^{{\tind}+1})$.
\end{itemize}
	\end{algorithm}
Some basic requirements for the matching and restriction operators are given in \Cref{sec:match-restrict}. Specific choices and convergence properties for the algorithmic components are given in \Cref{sec:proj} (matching) and \Cref{subsec:accel-extrap} (extrapolation).

\begin{remark}[Closure approximation]
Due to the possibly high computational cost of Monte Carlo simulation, another route has been followed in the literature, in which one derives an approximate macroscopic model to describe the system; see, e.\,g., \cite{Herrchen:1997p8915,Hyon:2008p9897,Keunings:1997p9982,Lielens:1998p6790,Lielens:1999p9945,Sizaire:1999p9912} for derivations of macroscopic closures for FENE dumbbell models. The goal then is to obtain a closed system of $L$ evolution equations for the macroscopic state variables
$\U$, complemented with a constitutive equation, for the observable $\bar{\bbf}$ of interest as a function of these macroscopic state variables. 	
In~\cite{Ilg:2002p10825}, a \emph{quasi-equilibrium} approach is proposed, based on thermodynamical considerations; although the method has been formulated for the FENE dumbbell case, it is applicable to general SDEs of the type \eqref{eq:SDE}.  Several algorithms have been presented to simulate the evolution of the quasi-equilibrium model numerically \cite{SamLelLeg10,Wang:2008}.  Note that, in contrast with the method presented in the present paper, numerical closures introduce the modeling assumption that a closed model in terms of the macroscopic state variables exists. The micro/macro acceleration method presented here only uses these macroscopic state variables for computational purposes, and maintains convergence to the full microscopic dynamics (see \cref{sec:conv}).
\end{remark}
	
\subsection{Matching and restriction operators}\label{sec:match-restrict}

Next, we give some detail on the \emph{restriction} and \emph{matching} operators that connect the microscopic and macroscopic levels of description.  To obtain the macroscopic state from the microscopic ensemble, the restriction operator is defined, which can readily be obtained by replacing the expectation $\E$ by the empirical mean~$\hE$,
	\begin{equation}
	 	U_l(t) =\Q_l\big(\bY(t)\big) = \hE u_l\big(t,\bY(t)\big).
	\end{equation}

Due to the constraint $\Q(\bY^{{\tind}+1})=\U^{{\tind}+1}$, during the matching step, the elements of $\bY^{{\tind}+1}$ are in general not independent, and Monte Carlo error estimates are not straightforward to obtain. For example, the Central Limit Theorem is not directly applicable \cite{caflisch98mca}.
 Therefore, to discuss the matching operator
$\PO$, we first turn to the idealized operators $\overline{\PO}$ and $\overline{\Q}$ obtained in the limit $\Eind \to \infty$, eliminating statistical error. Rather than acting on ensembles of configurations, these operators are defined directly on the random variables. More precisely, the restriction operator $\overline{\Q}$ reduces a random variable $\Z$ to macroscopic state variables,
	\begin{equation}\label{eq:Rbar}
	 \overline{\Q}(\Z)=\big(\overline{\Q}_l(\Z)\big)_{l=1}^L \text{ with } \overline{\Q}_l(\Z) =U_l= \E u_l(\Z) \text{ for $l=1,\ldots,L$}.
	\end{equation}
	The matching operator $\overline{\PO}$  maps a random variable $\Z$ onto a new random variable $\Z^*$ that corresponds to a macroscopic state $\U^*$,
	\begin{equation}
		\Z^*=\overline{\PO}(\Z,\U^*)\text{ with }\overline{\Q}(\Z^*)=\U^*.
	\end{equation}
	When analyzing the matching, we will consider the idealized operators $\overline{\PO}$ and $\overline{\Q}$.

A priori, we allow for considerable freedom in the definition of the matching operator, requiring only a few properties to be satisfied for any reasonable matching. First, we impose a projection property:
	\begin{defi}[Projection property]\label{def:selfconsistency}
	A matching operator $\bar{\PO}$ associated to a restriction operator $\bar{\Q}$  satisfies the projection property if
	\[\U =\bar{\Q}(\Z) \Rightarrow \Z=\bar{\PO}\big(\Z,\U\big)=\bar{\PO}_{\U}(\Z)\]
	for any suitable random variable $\Z$, i.\,e., if $\bar{\PO}_{\U}$ is a projection operator for every $\U$.
	\end{defi}
	The projection property states that a random variable remains unaffected by projection if its macroscopic state is equal to the macroscopic state on which one wants to project.	We remark also that \cref{def:selfconsistency} implies $\bar{\PO}_{\U}^2=\bar{\PO}_{\U}$, which justifies the use of the term \emph{projection}.

	Next, we consider the number of macroscopic state variables $L$ to vary, and define a sequence of vectors of macroscopic state variables $\left(\U_{[L]}\right)_{L=1,2,\dots}$, such that $\U_{[L]}=(U_l)_{l=1}^{L}$, i.\,e., for increasing $L$, additional macroscopic state variables are added. The corresponding sequences of matching and restriction operators are denoted as $\left(\overline{\PO}_{[L]}\right)_{L=1,2,\ldots}$ and $\left(\overline{\Q}_{[L]}\right)_{L=1,2,\ldots}$, respectively.

	Using this notation, we are ready to formulate the definitions of continuity and consistency of the matching step:

	\begin{defi}[Continuity of matching]
	Consider a set of random variables and a set of sequences of macroscopic states. A sequence of matching operators $\left(\overline{\PO}_{[L]}\right)_{L=1,2,\dots}$ is called continuous for these sets if, for all $g \in C_P^{0,2}(I\times\R^d, \R)$, there exists a constant $C$, depending only on $g$, such that
	\begin{equation}\label{eq:continuity}
	 |\E g\big(s,\overline{\PO}_{[L]}(\Z,\U_{[L]}^*)\big)-\E g\big(s,\overline{\PO}_{[L]}(\Z,\U^+_{[L]})\big)|\leq C\|\U^*_{[L]}-\U^+_{[L]}\|
	\end{equation}
	holds for all $L\geq1$, all sequences $\left(\U^*_{[L]}\right)_{L=1,2,\dots}$, $\left(\U^+_{[L]}\right)_{L=1,2,\dots}$ of macroscopic states and all $\Z$ in the considered sets, and all $s\in\I$.
	\end{defi}
	Here and in the following, the norm $\|\cdot\|$ should be chosen such that it remains bounded for $L\to\infty$ for component-wise bounded sequences.

	\begin{defi}[Consistency of matching]\label{def:consistencyprojection}
	Consider a sequence of matching operators $\left(\overline{\PO}_{[L]}\right)_{L=1,2,\dots}$. This sequence is called consistent for a class of sequences of triples $\left(\Z^*_{[L]},\Z^+_{[L]},\U_{[L]}\right)_{L=1,2,\dots}$ of random variables $\Z^*_{[L]}$, $\Z^+_{[L]}$, and macroscopic states $(\U_{[L]})$, if for all $g\in C_P^{0,2}(I\times\R^d, \R)$ there exist constants $C_L$, with $C_L\to0$ for $L\to\infty$, and $L_0$ such that it holds
	\begin{equation}\label{eq:consistency}
	|\E g\big(s,\overline{\PO}_{[L]}(\Z^*_{[L]},\U_{[L]})\big)-\E g\big(s,\overline{\PO}_{[L]}(\Z^+_{[L]},\U_{[L]})\big)|
	\leq C_L
	|\E g(s,\Z^*_{[L]})-\E g(s,\Z^+_{[L]})|
	\end{equation}
	for all $L\geq L_0$ and all sequences of triples $\left(\Z^*_{[L]},\Z^+_{[L]},\U_{[L]}\right)_{L=1,2,\dots}$ in the considered class.
	\end{defi}
	The possible dependence of the random variables on $L$ is present since the random variables will be considered to have been generated using the micro/macro acceleration algorithm, and therefore depend on $L$.

	Whereas continuity measures (in a weak sense) the difference between the matching of a random variable with two different macroscopic states, the consistency measures the difference between the matching of two different random variables with the same macroscopic state.

	\Cref{def:selfconsistency,def:consistencyprojection,def:weakcontinuitySDE} immediately imply the following corollary:
	\begin{cor}\label{cor:consensemblegeneration}
	Consider a sequence of  matching operators $\left(\overline{\PO}_{[L]}\right)_{L=1,2,\dots}$, and assume that this sequence is consistent for a set of sequences of triples 
\[\left(\Z_{[L]},\X^{t^-,\Z_{[L]}}(t^*),\overline{\Q}_{[L]}\big(\X^{t^-,\Z_{[L]}}(t^*)\big)\right)_{L=1,2,\dots},\]
	where $Z_{[L]}$ are random variables, $t^-=t^*-\Dt$, $\Dt\in[0,t^*-t^0]$, $t^*\in\I$. Suppose further that SDE \eqref{eq:SDE} is uniformly weakly continuous for the set of all $\Z_{[L]}$.
	Then for all $g \in C_P^{0,2}(I\times\R^d, \R)$ there exist constants $\Dt_0>0$ and $C_L$, with $C_L\to0$ for $L\to\infty$, and $L_0$, such that it holds
	\begin{equation}
	 |\E g\big(s,\X^{t^-,\Z_{[L]}}(t^*)\big)-\E g\big(s,\overline{\PO}_{[L]}(\Z_{[L]},\U_{[L]}^*)\big)|\leq C_L\Dt
	\end{equation}
	for all $L\geq L_0$, all $\left(\Z_{[L]},\X^{t^-,\Z_{[L]}}(t^*),\U_{[L]}^*=\overline{\Q}_{[L]}\big(\X^{t^-,\Z_{[L]}}(t^*)\big)\right)_{L=1,2,\dots}$ in the considered set, $\Dt\in[0,\Dt_0]$, and all $t^*\in[t^0+\Dt,T]$, $s\in\I$.
	\end{cor}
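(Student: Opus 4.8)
\emph{Proof proposal.} The plan is to chain together the three standing hypotheses — the projection property of the matching (\cref{def:selfconsistency}), the consistency of the matching (\cref{def:consistencyprojection}), and the uniform weak continuity of the SDE (\cref{def:weakcontinuitySDE}) — in one short sequence of estimates; no new analytical ingredient is needed.

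Fix $g \in C_P^{0,2}(\I\times\R^d,\R)$ and write throughout $t^- = t^*-\Dt$ and $\U^*_{[L]} = \overline{\Q}_{[L]}\big(\X^{t^-,\Z_{[L]}}(t^*)\big)$. First, I would apply consistency of the matching to the sequence of triples $\big(\Z_{[L]},\,\X^{t^-,\Z_{[L]}}(t^*),\,\U^*_{[L]}\big)_{L}$, which is precisely the class for which $\big(\overline{\PO}_{[L]}\big)$ is assumed consistent. This produces $L_0$ and constants $C_L\to 0$ (uniform in $\Dt$ and $t^*$) with
\[
\big|\E g\big(s,\overline{\PO}_{[L]}(\Z_{[L]},\U^*_{[L]})\big) - \E g\big(s,\overline{\PO}_{[L]}(\X^{t^-,\Z_{[L]}}(t^*),\U^*_{[L]})\big)\big| \le C_L\,\big|\E g(s,\Z_{[L]}) - \E g\big(s,\X^{t^-,\Z_{[L]}}(t^*)\big)\big|
\]
for all $L\ge L_0$ and all admissible $s$.

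Second, since $\U^*_{[L]}$ is by construction the restriction of $\X^{t^-,\Z_{[L]}}(t^*)$, the projection property applied with $\Z = \X^{t^-,\Z_{[L]}}(t^*)$ gives $\overline{\PO}_{[L]}\big(\X^{t^-,\Z_{[L]}}(t^*),\U^*_{[L]}\big) = \X^{t^-,\Z_{[L]}}(t^*)$. Substituting this identity makes the left-hand side of the display coincide — up to the sign inside the modulus — with the quantity to be bounded, $\big|\E g\big(s,\X^{t^-,\Z_{[L]}}(t^*)\big) - \E g\big(s,\overline{\PO}_{[L]}(\Z_{[L]},\U^*_{[L]})\big)\big|$. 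Third, I would estimate the remaining factor on the right by invoking uniform weak continuity of the SDE for the class of all $\Z_{[L]}$: this supplies $\Dt_0>0$ and a constant $C$, both depending only on $g$, such that $\big|\E g(s,\Z_{[L]}) - \E g\big(s,\X^{t^-,\Z_{[L]}}(t^*)\big)\big| \le C\Dt$ whenever $\Dt\in[0,\Dt_0]$, $t^- \in[t^0,T-\Dt]$ (equivalently $t^*\in[t^0+\Dt,T]$), and $s\in\I$. Combining the two inequalities and absorbing $C$ into the null sequence (replacing $C_L$ by $C\,C_L$) yields the claim, with this same $\Dt_0$.

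I do not anticipate a genuine obstacle; the care required is purely bookkeeping. One must check that the triple fed into \cref{def:consistencyprojection} is literally of the prescribed form, that the constants $C_L$ delivered by consistency are uniform in $\Dt$ and $t^*$ (which holds because all these triples lie in the single class over which consistency is assumed), and that the range of $t^*$ and the value of $\Dt_0$ match those provided by \cref{def:weakcontinuitySDE}. The $L$-dependence of the random variables, generated by the acceleration algorithm, is already built into all three definitions, so nothing extra is needed on that account.
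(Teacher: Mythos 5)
Your proof is correct and follows exactly the chain the paper has in mind: the paper gives no written proof, stating only that \cref{def:selfconsistency,def:consistencyprojection,def:weakcontinuitySDE} "immediately imply" the corollary, and your argument — consistency to compare the two matchings, the projection property to collapse $\overline{\PO}_{[L]}\big(\X^{t^-,\Z_{[L]}}(t^*),\U^*_{[L]}\big)$ to $\X^{t^-,\Z_{[L]}}(t^*)$, and uniform weak continuity to bound the resulting factor by $C\Dt$ — is precisely that implication made explicit. The bookkeeping you flag (the triple is literally of the assumed form, the $\Dt_0$ and the range $t^*\in[t^0+\Dt,T]$ come straight from Definition~\ref{def:weakcontinuitySDE}) is all in order.
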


	This corollary states that the difference, measured in a weak sense, between the exact distribution at some time instance $t^*$ and a matching from a previous time $t^-=t^*-\Dt$ with the exact macroscopic state at $t^*$ vanishes when matching with more macroscopic state variables or letting $\Dt\to0$.

	In the remainder of the text, we will only use the properties stated above. Therefore, one may use any matching operator that would come to mind for a particular problem, as long as the above properties are satisfied.

\subsection{Matching failure and adaptive time-stepping\label{sec:proj-failure}}

In the numerical experiments, one might encounter situations in which the distributions evolve on time-scales that are not significantly slower than those of the macroscopic functions of interest.  In that case, when taking a large extrapolation time step, the extrapolated macroscopic state differs significantly from the state corresponding to the last available polymer ensemble, and it is possible that matching the ensemble with the desired macroscopic state fails. Consequently, a failure in the matching can be used as an indication to decrease the extrapolation time step.  Based on this observation, we propose the following criterion to adaptively determine the macroscopic step size $\Dt$: If the matching fails, we reject the step and try again with a time step
\begin{equation}\label{eq:ad_1}
 \Dt_{\text{new}}=\max(\underline{\alpha} \Dt, K\dt), \qquad\underline{\alpha} < 1,
\end{equation}
 whereas, when the matching succeeds, we accept the step and propose
\begin{equation}\label{eq:ad_2}
 \Dt_{\text{new}}=\min(\overline{\alpha} \Dt,\Dt_{\text{max}}),\qquad \overline{\alpha}>1
 \end{equation}
 for the next step. If the macroscopic step size $\Dt_{\text{new}} = K\dt$, matching becomes trivial (the identity operator), since there is no extrapolation. Note that, when this happens, the criterion will ensure that the larger time steps are tried after the next burst of microscopic simulation.


\section{Matching operator\label{sec:proj}}

In this section, we analyze the matching operator in more detail.  We first define the specific matching operator that will be considered in this paper (\cref{sec:PropertiesProjectionstep}). We then prove a consistency result in a very special case (\cref{sec:prop-proj-result}) and give and illustrate then a conjecture for the general case (\cref{sec:prop-proj-conj}).

\subsection{Definition of specific matching operator\label{sec:PropertiesProjectionstep}}

The matching operator $\PO$ can be defined in several ways. One option is to minimize the difference between the given ensemble and the result of the matching, for instance in the $2-norm$, i.\,e.,
\begin{equation}\label{eq:AnsatzVertBerech}
\bY^{{\tind}+1}=\argmin_{\bZ:~\Q(\bZ)=\U^{{\tind}+1}}\frac{1}{2}\|\bZ -\bY^{{\tind},K}\|^2.
\end{equation}
An approximation of the resulting ensemble can be obtained using a Lagrange multiplier technique:
\begin{equation}\label{eq:proj1}
\left\{ \begin{aligned} &\bY^{{\tind}+1}=\bY^{{\tind},K}+\sum_{l=1}^L
\lambda_l \nabla_{\bY} \Q_l (\bY^{{\tind},K}),\\ &\text{ with
$\Lambda=\{\lambda_l\}_{l=1}^L$ such that $\Q_l (\bY^{{\tind}+1}) =
U_l^{{\tind}+1}$ for $l=1,\ldots,L$.} \end{aligned} \right. \end{equation}
Then, $\bY^{{\tind}+1}=\PO(\bY^{{\tind},K},\U^{{\tind}+1})$ results after
obtaining $\Lambda$ from a Newton procedure that solves the $L$-dimensional nonlinear system that defines the constraints.
(Note that one can show that the resulting ensemble exactly satisfies \eqref{eq:AnsatzVertBerech} if we write an implicitly defined
gradient $\nabla_{\bY} \Q_l (\bY^{{\tind}+1})$ in the first line of
\eqref{eq:proj1}.)\\
By applying the implicit function theorem, we obtain the following lemma:
\begin{lem}
The problem \eqref{eq:proj1} has a locally unique solution if
\begin{itemize}
\item in the case of standard empirical moments $U_l^n$,
\begin{equation}\label{eq:conduniquesolstempmom}
\det\left(U_{i+k-2}^n\right)_{i,k=1,\dots,L}\neq0,
\end{equation}
\item in the case of empirical centralized moments $U_l^n$,
\[
\det\left(U_{i+k-2}^n-U_{i-1}^nU_{k-1}^n\right)_{i,k=2,\dots,L}\neq0.
\]
\end{itemize}
\end{lem}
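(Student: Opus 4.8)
The plan is to regard \eqref{eq:proj1} as a nonlinear equation for the Lagrange multiplier vector $\Lambda=(\lambda_1,\dots,\lambda_L)\in\R^L$, parametrized by the macroscopic target $\U^{\tind+1}$, and to apply the implicit function theorem at the trivial reference solution. Throughout, we work in the one-dimensional setting in which the stated conditions live: $u_l(t,x)=x^l$ in the standard case, $u_1(t,x)=x$ and $u_l(t,x)=(x-U_1)^l$ for $l\ge2$ in the centralized case, and $\Q_l(\bY)=\hE u_l(\bY)$. Fixing the ensemble $\bY^{\tind,K}$, the first line of \eqref{eq:proj1} makes $\bY^{\tind+1}$ an explicit affine function of $\Lambda$, namely $\bY^{\tind+1}=\Psi(\Lambda):=\bY^{\tind,K}+\sum_{m=1}^L\lambda_m\nabla_{\bY}\Q_m(\bY^{\tind,K})$, so that a solution of \eqref{eq:proj1} is precisely a zero of
\[
F(\Lambda,\U):=\big(\Q_l(\Psi(\Lambda))-U_l\big)_{l=1}^L
\]
with $\U=\U^{\tind+1}$. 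Since $\Psi(0)=\bY^{\tind,K}$, we have $F(0,\U^\circ)=0$ with $\U^\circ:=\Q(\bY^{\tind,K})$; this is the reference point, and $F$ is smooth in $\Lambda$ because each $\Q_l$ is polynomial in the ensemble.

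Differentiating by the chain rule gives $\partial F_l/\partial\lambda_k=\nabla_{\bY}\Q_l(\Psi(\Lambda))\cdot\nabla_{\bY}\Q_k(\bY^{\tind,K})$, which at $\Lambda=0$ equals the Gram matrix $\mathbf G=\big(\langle\nabla_{\bY}\Q_l(\bY^{\tind,K}),\nabla_{\bY}\Q_k(\bY^{\tind,K})\rangle\big)_{l,k=1}^L$. By the implicit function theorem, if $\det\mathbf G\ne0$ there is a neighbourhood of $\U^\circ$ on which the system has a unique solution $\Lambda=\Lambda(\U^{\tind+1})$ near $0$, hence a locally unique matched ensemble $\bY^{\tind+1}=\Psi(\Lambda(\U^{\tind+1}))$. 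It therefore remains to show that $\det\mathbf G\ne0$ is equivalent to the two stated determinant conditions.

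In the standard case $\nabla_{\bY}\Q_l=\tfrac{l}{J}(Y_j^{l-1})_{j=1}^J$, so $\langle\nabla_{\bY}\Q_l,\nabla_{\bY}\Q_k\rangle=\tfrac{lk}{J}\,U^{\tind}_{l+k-2}$ with the convention $U^{\tind}_0=1$; thus $\mathbf G=\tfrac1J\,\mathrm{diag}(1,\dots,L)\,\big(U^{\tind}_{i+k-2}\big)_{i,k=1}^L\,\mathrm{diag}(1,\dots,L)$, and $\det\mathbf G$ vanishes iff $\det\big(U^{\tind}_{i+k-2}\big)_{i,k=1}^L=0$, i.e.\ iff \eqref{eq:conduniquesolstempmom} fails. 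In the centralized case one must also differentiate the dependence of $u_l$ on $U_1=\overline{Y}$: this yields $\nabla_{\bY}\Q_1=\tfrac1J(1,\dots,1)$ and, for $l\ge2$, $\nabla_{\bY}\Q_l=\tfrac{l}{J}\big((Y_j-\overline{Y})^{l-1}-U^{\tind}_{l-1}\big)_{j=1}^J$, where now $U^{\tind}_m$ denotes the $m$-th centralized moment. A short computation then shows that the products involving $\nabla_{\bY}\Q_1$ cancel and $\langle\nabla_{\bY}\Q_l,\nabla_{\bY}\Q_k\rangle=\tfrac{lk}{J}\big(U^{\tind}_{l+k-2}-U^{\tind}_{l-1}U^{\tind}_{k-1}\big)$ for $l,k\ge2$; hence $\mathbf G$ is block diagonal, and up to a nonzero factor $\det\mathbf G=\det\big(U^{\tind}_{i+k-2}-U^{\tind}_{i-1}U^{\tind}_{k-1}\big)_{i,k=2}^L$, which is the second condition.

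The only delicate point is the centralized-moment computation: because the objective functions $u_l$ themselves depend on the ensemble-dependent mean $U_1(\bY)$, the gradients $\nabla_{\bY}\Q_l$ must be taken with the chain rule, and one must verify that the cross terms with $\nabla_{\bY}\Q_1$ vanish so that $\mathbf G$ decouples into the $1\times1$ block $\tfrac1J$ and the $(L-1)\times(L-1)$ block displayed above. The implicit function theorem bookkeeping (smoothness of $F$ in $\Lambda$, the identification of the Jacobian with the Gram matrix, invertibility at the reference point) and the polynomial algebra in the standard case are routine.
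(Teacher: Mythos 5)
Your proof is correct, and it takes exactly the route the paper indicates (the paper states only ``by applying the implicit function theorem'' and gives no further detail). You formulate the problem as $F(\Lambda,\U)=0$ with $F_l(\Lambda,\U)=\Q_l(\Psi(\Lambda))-U_l$, identify the Jacobian $\partial F/\partial\Lambda$ at $\Lambda=0$ with the Gram matrix of the gradients $\nabla_{\bY}\Q_l(\bY^{\tind,K})$, and reduce its non-degeneracy to the stated Hankel-type determinant conditions; the centralized-moment computation, including the verification that the cross terms with $\nabla_{\bY}\Q_1$ vanish so the Gram matrix block-decouples, is carried out correctly. The only point worth flagging is notational: the moments entering your Gram matrix are $\Q_l(\bY^{\tind,K})$, whereas the lemma writes $U_l^n$; these coincide under the paper's convention that matching starts from the ensemble at the end of the microscopic burst, so this is cosmetic rather than substantive.
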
%
\begin{remark}The determinant
\[
\det\left(\E X^{i+k-2}(t^n)\right)_{i,k=2,\dots,L},
\]
obtained by replacing the empirical moments in \eqref{eq:conduniquesolstempmom} by the moments themselves, is, from the theory of the Hamburger moment problem, known to be positive for distributions with finite support.
\end{remark}

Alternative matching operators can be defined by measuring the difference between random variables differently, for instance using the Kullback-Leibler divergence (relative entropy) \cite{KullbackLeibler51}. For some choices of the macroscopic state variables $\U$ one
could also consider classical moment matching, as described in
\cite{caflisch98mca}.

\begin{remark}[Matching for the FENE dumbbells] For the FENE dumbbells, an accept-reject strategy is applied during the combined evolution and matching, i.\,e., if the state of a polymer would become unphysical during the matching, we reject the trial move for that specific polymer in the evolution step and repeat the time step for this polymer, after which the matching of the ensemble is tried again.
\end{remark}

\subsection{Particular results for normal distributions\label{sec:prop-proj-result}}
For simplicity of the argument, and without loss of generality, we restrict ourselves to a one-dimensional notation for the rest of this section.
In the case of scalar normally distributed random variables, the following result can be proved.
\begin{lem}\label{lem:normproject}
Consider a scalar random variable $Z$ and suppose that the macroscopic state variables are given by $u_1(z)=z$ and $u_2(z)=(z-U_1)^2$. Suppose further that in analogy to \eqref{eq:AnsatzVertBerech}, $\overline{\PO}$ is given by
\begin{equation}\label{eq:AnsatzVertBerechideal}
\overline{\PO}(Z,\U^*)=\argmin_{\overline{\Q}(Z^*)=\U^*}\frac{1}{2}\E\left((Z^*-Z)^2\right).
\end{equation}
Then, the random variable $\overline{\PO}(Z,\U^*)$ is also normally distributed with mean and variance given by $\U^*$.
\end{lem}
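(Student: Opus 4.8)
The plan is to solve the constrained minimization in \eqref{eq:AnsatzVertBerechideal} explicitly by exploiting the fact that, for normal distributions, a natural candidate family of maps from $Z$ to $Z^*$ is the affine maps $Z^* = aZ + b$. First I would parametrize: given the target macroscopic state $\U^* = (U_1^*, U_2^*)$ with $U_2^* > 0$, consider the affine transformation $Z^* = \sigma^* (Z - \E Z)/\sigma + U_1^*$, where $\sigma^2 = \Var(Z)$ and $(\sigma^*)^2 = U_2^*$; this maps the (assumed normal) $Z$ onto a normal random variable with mean $U_1^*$ and variance $U_2^*$, hence with $\overline{\Q}(Z^*) = \U^*$. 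So this affine map is feasible. The objective it achieves is $\E\big((Z^* - Z)^2\big) = (\E Z - U_1^*)^2 + (\sigma - \sigma^*)^2$, by expanding and using that the zero-mean parts are scalar multiples of one another. (One should note $a = \sigma^*/\sigma > 0$ is forced among affine maps to keep the variance positive and the ``distance'' minimal; the map with $a = -\sigma^*/\sigma$ is also feasible but gives the larger value $(\sigma+\sigma^*)^2$ for the second term.)

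Next I would argue that no feasible $Z^*$ whatsoever can do better than this affine candidate, i.e. that $\E\big((Z^*-Z)^2\big) \ge (\E Z - U_1^*)^2 + (\sigma-\sigma^*)^2$ for every $Z^*$ with $\E Z^* = U_1^*$ and $\Var(Z^*) = U_2^*$. Writing $Z^* - Z = (Z^* - U_1^*) - (Z - \E Z) - (\E Z - U_1^*)$ and expanding the square, the cross term involving $(\E Z - U_1^*)$ vanishes (both other summands are centered), so $\E\big((Z^*-Z)^2\big) = (\E Z - U_1^*)^2 + \E\big[((Z^*-U_1^*) - (Z - \E Z))^2\big]$. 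The remaining expectation equals $(\sigma^*)^2 + \sigma^2 - 2\,\mathrm{Cov}(Z^*, Z)$, and by Cauchy--Schwarz $\mathrm{Cov}(Z^*,Z) \le \sigma^*\sigma$, so the expectation is $\ge (\sigma^* - \sigma)^2$, with equality iff $Z^* - U_1^*$ and $Z - \E Z$ are almost surely proportional with a nonnegative constant — which, given the variance constraint, forces exactly the affine candidate above. Hence the affine map is the unique minimizer, and $\overline{\PO}(Z,\U^*)$ is normal with mean $U_1^*$ and variance $U_2^*$, as claimed.

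One subtlety to address carefully is whether the argmin in \eqref{eq:AnsatzVertBerechideal} is taken over random variables living on the same probability space as $Z$ (so that $\mathrm{Cov}(Z^*,Z)$ is meaningful) or over distributions; I would state explicitly that it is the former, which is the relevant setting since in the algorithm the matched ensemble is obtained by perturbing the existing ensemble $\bY^{{\tind},K}$ — this matches the finite-$J$ formulation \eqref{eq:AnsatzVertBerech}. I expect the main obstacle to be precisely this bookkeeping about the optimization domain together with the equality-case analysis in Cauchy--Schwarz: one must verify that attaining $\mathrm{Cov}(Z^*,Z) = \sigma^*\sigma$ together with the prescribed mean and variance pins down $Z^*$ uniquely (up to a.s. equality), rather than merely showing some normal minimizer exists. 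Everything else is a short expansion of squares; no regularity assumptions beyond $Z$ being square-integrable and normal are needed, and the normality of $Z$ enters only to guarantee that the optimal affine image is again normal.
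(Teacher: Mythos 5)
Your proposal is correct and arrives at the same explicit affine minimizer as the paper, namely $\overline{\PO}(Z,\U^*)=\mu^*+\frac{\sigma^*}{\sigma}(Z-\mu)$ with $\mu=\E Z$, $\sigma^2=\Var Z$. The difference is one of rigor rather than route: the paper simply \emph{asserts} the closed form \eqref{eq:Ynpsend} (with a $\pm$ ambiguity, since both sign choices are feasible) and then invokes the fact that affine images of normals are normal, whereas you actually \emph{derive} the minimizer. Your derivation — splitting $Z^*-Z=(\mu^*-\mu)+\big((Z^*-\mu^*)-(Z-\mu)\big)$, killing the cross term, and applying Cauchy--Schwarz to $\mathrm{Cov}(Z^*,Z)$ with the equality case forcing positive proportionality — is the missing justification, and it also resolves the sign ambiguity in \eqref{eq:Ynpsend} by showing the minus sign gives the strictly larger value $(\sigma+\sigma^*)^2$. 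Your remark on the optimization domain is well-taken: the argmin must range over random variables coupled with $Z$ on the same probability space (so that $\E[(Z^*-Z)^2]$ and $\mathrm{Cov}(Z^*,Z)$ make sense), consistent with the finite-ensemble formulation \eqref{eq:AnsatzVertBerech} where one perturbs the existing ensemble. One small observation: normality of $Z$ is used only at the very last step to conclude that the affine image is normal; the identification of the optimal $Z^*$ as an affine map of $Z$ holds for any square-integrable $Z$ with $\sigma>0$, a point worth emphasizing since it explains why the paper's later arguments for non-Gaussian distributions can still build on this matching operator.
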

\begin{proof}Denote by
\[
	\begin{pmatrix}
	\mu\\\sigma^2
	\end{pmatrix}
	=
	\overline{\Q}(Z),\quad
	\begin{pmatrix}
	\mu^*\\(\sigma^*)^2
	\end{pmatrix}
	=
	\U^*,\quad\sigma,\sigma^*>0.
	\]
Then, \eqref{eq:AnsatzVertBerechideal} yields
\begin{equation}\label{eq:Ynpsend}
\overline{\PO}(Z,\U^*)=\pm\sqrt{\frac{(\sigma^*)^2}{\sigma^2}}\left(Z-\mu\pm \mu^*\sqrt{\frac{\sigma^2}{(\sigma^*)^2}}\right).
\end{equation}
Thus, if $Z$ is normally distributed, so is $\overline{\PO}(Z,\U^*)$.
\end{proof}

With the help of this lemma, we can easily show the following two corollaries.
\begin{cor}
Suppose that the hierarchy of macroscopic state variables is defined using
$u_1(z)=z$ and $u_l(z)=(z-U_1)^l$ for $l\geq2$. Suppose further that $\overline{\PO}$ is given by
\eqref{eq:AnsatzVertBerechideal}.
Then, the sequence of matching operators is consistent with $C_L=0$ for $L\geq2=L_0$ for all sequences of triples $\left(Z_{[L]}^+,Z_{[L]}^-,\U_{[L]}^*\right)_{L=1,2,\dots}$, where $Z_{[L]}^+$ and $Z_{[L]}^-$ are normally distributed and $\left(\U_{[L]}^*\right)_{L=1,2,\dots}$ are sequences of centralized moment values consistent with normal distributions.
\end{cor}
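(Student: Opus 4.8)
The plan is to reduce the consistency estimate \eqref{eq:consistency} to a statement about the \emph{law} of the matched random variables, and then to pin that law down using \cref{lem:normproject}. First I would observe that it suffices to show the following: for $L\ge 2$, whenever $Z$ is normally distributed and $\U^*$ is a vector of centralized moments consistent with a normal law, the random variable $\overline{\PO}_{[L]}(Z,\U^*)$ is again normal with mean $U_1^*$ and variance $U_2^*$. Indeed, given this, for any admissible triple $\left(Z^+_{[L]},Z^-_{[L]},\U^*_{[L]}\right)$ (playing the role of $(\Z^*_{[L]},\Z^+_{[L]},\U_{[L]})$ in \cref{def:consistencyprojection}) the two random variables $\overline{\PO}_{[L]}(Z^+_{[L]},\U^*_{[L]})$ and $\overline{\PO}_{[L]}(Z^-_{[L]},\U^*_{[L]})$ have the \emph{same} distribution $N(U_1^*,U_2^*)$; since $g\in C_P^{0,2}(I\times\R,\R)$ is integrable against a Gaussian, the left-hand side of \eqref{eq:consistency} then vanishes, so one may take $C_L=0$ for all $L\ge 2=L_0$, and the requirement $C_L\to0$ is trivial.

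It then remains to prove the claim about the law of $\overline{\PO}_{[L]}(Z,\U^*)$. For $L=2$ this is exactly \cref{lem:normproject}: with $u_1(z)=z$, $u_2(z)=(z-U_1)^2$ and the matching \eqref{eq:AnsatzVertBerechideal}, formula \eqref{eq:Ynpsend} exhibits $\overline{\PO}_{[2]}(Z,\U^*)$ as an affine image $T(Z)=\tfrac{\sigma^*}{\sigma}(Z-\mu)+\mu^*$ of $Z$ (up to a reflection, which does not change the law), hence normal with the prescribed first two moments. For $L>2$ I would compare the feasible sets of the constrained minimization \eqref{eq:AnsatzVertBerechideal}. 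The feasible set for the $L$-variable problem, $\{Z^*:\overline{\Q}_{[L]}(Z^*)=\U^*_{[L]}\}$, prescribes the first $L$ centralized moments and is therefore contained in the feasible set for $L=2$, which prescribes only mean and variance. Now $T(Z)$ is $N(U_1^*,U_2^*)$-distributed, and the centralized moments of $N(U_1^*,U_2^*)$ coincide with the components of $\U^*_{[L]}$ precisely because $\U^*_{[L]}$ was assumed consistent with a normal distribution; hence $T(Z)$ is feasible for the $L$-variable problem. Since $T(Z)$ minimizes the common objective $\tfrac12\E((Z^*-Z)^2)$ over the larger ($L=2$) feasible set and lies in the smaller one, it is also a minimizer there, so $\overline{\PO}_{[L]}(Z,\U^*)=T(Z)\sim N(U_1^*,U_2^*)$. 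Equivalently, in terms of the Lagrange characterization \eqref{eq:proj1}, taking $\lambda_l=0$ for $l\ge 3$ together with the multipliers of the $L=2$ problem solves the $L$-dimensional constraint system, and by the local-uniqueness lemma this is the branch selected by the Newton procedure.

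The main obstacle is exactly this last step: one must ensure that enlarging the set of matched moments from $2$ to $L$ does not move the minimizer, and that the argument is insensitive to the non-uniqueness already visible in \eqref{eq:Ynpsend}. Both points are handled by the observations that (i) the higher centralized-moment constraints are automatically satisfied by any normal distribution with the correct mean and variance, so they are inactive along the relevant solution branch, and (ii) the reflection ambiguity in \eqref{eq:Ynpsend} only replaces $T(Z)$ by another affine image of $Z$ with the identical law $N(U_1^*,U_2^*)$, which is all that enters $\E g(s,\cdot)$. Once the law of the matched variables is fixed, substituting into \eqref{eq:consistency} and reading off $C_L=0$, $L_0=2$ is immediate.
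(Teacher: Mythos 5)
Your proof is correct and follows essentially the same route as the paper's own (very terse) proof: reduce consistency to showing that $\overline{\PO}_{[L]}(Z,\U^*)$ is always $N(U_1^*,U_2^*)$, establish this for $L=2$ via \cref{lem:normproject}, and then observe that for $L>2$ the additional centralized-moment constraints are automatically satisfied by the affine image $T(Z)$ because $\U^*_{[L]}$ is consistent with a normal law. The paper compresses the $L>2$ case into ``for the same reason, \eqref{eq:Ynpsend} holds also for $L>2$''; your feasible-set-nesting argument (the $L$-constraint feasible set is contained in the $L=2$ one, $T(Z)$ lies in the smaller set and minimizes over the larger) is the explicit justification the paper leaves implicit, and the remark on the reflection ambiguity in \eqref{eq:Ynpsend} only affecting the random variable, not its law, is a correct and useful clarification.
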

\begin{proof}
We first consider the case $L=2$.
As the normal distribution is uniquely determined by its first two (centralized) moments, \cref{lem:normproject} implies that for two normally distributed random variables $Z_1$ and $Z_2$, $\overline{\PO}_{[2]}(Z_1,\U_{[2]}^*)$ and $\overline{\PO}_{[2]}(Z_2,\U_{[2]}^*)$ are identically distributed, and thus $C_2=0$. For the same reason, \eqref{eq:Ynpsend} holds also for $L>2$, and also in this case $C_L=0$, and the matching is consistent.
\end{proof}
\begin{cor}
Suppose that the hierarchy of macroscopic state variables is defined using
$u_1(z)=z$ and $u_l(z)=(z-U_1)^l$ for $l\geq2$. Suppose further that $\overline{\PO}$ is given by
\eqref{eq:AnsatzVertBerechideal}.
Then, the sequence of matching operators is continuous for all normally distributed random variables, and sequences of centralized moment values consistent with normal distributions.
\end{cor}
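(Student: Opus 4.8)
The plan is to turn the statement into an elementary moment estimate by invoking the explicit form of the matching operator established in \cref{lem:normproject}. Fix a normal random variable $Z$ with $\overline{\Q}(Z)=(\mu,\sigma^2)$, $\sigma>0$, and let $(\mu^*,(\sigma^*)^2)$ and $(\mu^+,(\sigma^+)^2)$ be the first two components of the target states $\U^*_{[L]}$ and $\U^+_{[L]}$. For $L\ge2$, \eqref{eq:Ynpsend} — after selecting the branch that actually minimizes $\E((Z^*-Z)^2)$, which is the one with the $+$ signs since $(\sigma^*-\sigma)^2<(\sigma^*+\sigma)^2$ for $\sigma,\sigma^*>0$ — gives
\[
\overline{\PO}_{[L]}(Z,\U^*_{[L]})=\mu^*+\tfrac{\sigma^*}{\sigma}\,(Z-\mu)=:a,\qquad
\overline{\PO}_{[L]}(Z,\U^+_{[L]})=\mu^++\tfrac{\sigma^+}{\sigma}\,(Z-\mu)=:b.
\]
Exactly as in the proof of the preceding corollary, this expression depends on $\U^*_{[L]}$ only through its first two components — because $Z$ is normal and the targets are centralized-moment sequences consistent with normal distributions, the affine map realizing the first two moments automatically realizes all higher centralized moments — so the output is the same random variable for every $L\ge2$. (The case $L=1$ is even simpler: the matching is then the deterministic shift $Z\mapsto Z+(\mu^*-\mu)$, and the estimate below is immediate.)

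With the explicit form in hand, I would estimate $\E g(s,a)-\E g(s,b)$ for $g\in C_P^{0,2}(\I\times\R,\R)$. The mean value theorem in the spatial variable gives $g(s,a)-g(s,b)=\partial_xg(s,\xi)(a-b)$ for some $\xi$ between $a$ and $b$; combining the polynomial growth bound $|\partial_xg(s,\xi)|\le\tilde C(1+|\xi|^\kappa)$ (uniform in $s\in\I$) with $|\xi|\le|a|+|b|\le C_2(1+|Z-\mu|)$ reduces everything to bounding $\E\big[(1+|Z-\mu|)^{\kappa}\,|a-b|\big]$. Since $|a-b|\le|\mu^*-\mu^+|+\sigma^{-1}|\sigma^*-\sigma^+|\,|Z-\mu|$ and $|\sigma^*-\sigma^+|=|(\sigma^*)^2-(\sigma^+)^2|/(\sigma^*+\sigma^+)$, while both $|\mu^*-\mu^+|$ and $|(\sigma^*)^2-(\sigma^+)^2|$ are dominated by $\|\U^*_{[L]}-\U^+_{[L]}\|$ (as any admissible norm controls each fixed-index component up to a constant), one obtains $|a-b|\le C_1\|\U^*_{[L]}-\U^+_{[L]}\|\,(1+|Z-\mu|)$. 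Plugging this in and using that $\E\big[(1+|Z-\mu|)^{\kappa+1}\big]$ is finite because $Z$ is normal, the expectation is bounded by $C\,\|\U^*_{[L]}-\U^+_{[L]}\|$ uniformly in $s\in\I$, which is \eqref{eq:continuity}.

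The main difficulty I anticipate is not the computation but the uniformity of $C$: it must be independent of $L$, of the two states, and of $Z$ across the considered sets, and this forces one to make precise what "the considered sets" must contain. One needs a positive lower bound on the variances (so that $\sigma$ is bounded away from $0$ and $\sigma^*+\sigma^+$ is bounded below — exactly what lets $|\sigma^*-\sigma^+|$ be controlled by the state-variable difference $|(\sigma^*)^2-(\sigma^+)^2|$ rather than by $|\sigma^*-\sigma^+|$ itself) together with upper bounds on the means and variances (so that $|\xi|\le C_2(1+|Z-\mu|)$ and $\E\big[(1+|Z-\mu|)^{\kappa+1}\big]$ stay uniformly bounded). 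The non-degeneracy $\sigma,\sigma^*>0$ is already a standing hypothesis in \cref{lem:normproject} and its corollaries; the content of the present corollary is that, once the considered sets carry such uniform parameter bounds, the argument closes with $C$ depending only on $g$ and those bounds. A secondary point to dispatch along the way is the sign ambiguity in \eqref{eq:Ynpsend}: both branches reproduce the prescribed moments, but only the one with the $+$ signs minimizes $\E((Z^*-Z)^2)$, so $\overline{\PO}$ is the affine map displayed above.
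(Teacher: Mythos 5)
Your proof is correct and follows essentially the same route as the paper: both invoke \cref{lem:normproject} to obtain the explicit affine form of $\overline{\PO}_{[L]}(Z,\U^*_{[L]})$, apply the mean value theorem in the spatial argument, and close with the polynomial growth bound on $\partial_x g$ together with finiteness of Gaussian moments (the paper parametrizes via a standard normal $z$ where you parametrize via $Z$ itself, a cosmetic difference). Your explicit remark about needing uniform lower bounds on the variances and upper bounds on the parameters to make the constant $C$ uniform over the considered sets is a point the paper leaves implicit, and is worth flagging.
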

\begin{proof}
For $L\geq2$, \cref{lem:normproject} implies again that $Z^*=\overline{\PO}_{[L]}(Z,\U^*_{[L]})$ and $Z^+=\overline{\PO}_{[L]}(Z,\U^+_{[L]})$ are normally distributed if $Z$ is normally distributed and $\U^*_{[L]}$, $\U^+_{[L]}$ are sequences of centralized moment values consistent with normal distributions. Denoting the corresponding expectations by $\mu^*$, resp.~$\mu^+$, and variances by
$(\sigma^*)^2$, resp.~$(\sigma^+)^2$,
it holds for all $f \in C_P^{1}(\R, \R)$
\begin{align*}
|\E f(s,Z^*)-\E f(s,Z^+)|
=&\left|\int_{\R}\big(f(s,\sigma^* z+\mu^*)-f(s,\sigma^+ z+\mu^+)\big)\frac1{\sqrt{2\pi}}e^{-z^2/2}~dz\right|\\
=&\left|\int_{\R}f'(s,\xi_z)[(\sigma^*-\sigma^+)z+\mu^*-\mu^+]\frac{1}{\sqrt{2\pi}}e^{-z^2/2}~dz\right|,
\end{align*}
where $\xi_z\in[\sigma^* z+\mu^*,\sigma^+ z+\mu^+]$. As there exist constants $\tilde{C}$ and $r \in \N$ such that $|f'(s,\xi_z)|\leq\tilde{C}(1+|\max\{\sigma^*,\sigma^+\}z+\max\{\mu^*,\mu^+\}|^{2r})$, this implies also that the sequence of matching operators is continuous (the corresponding \cref{eq:continuity} can be verified similarly in the case $L=1$).
\end{proof}

Several observations can be made. First, one can obtain a similar result whenever the distributions are defined by a finite number of moments (for instance, a lognormal distribution) by taking this knowledge into account when defining the matching operator.
Second, we remark that, if the random variables are normally distributed at all moments in time, this implies that they represent solutions of a linear SDE
in the narrow sense,
\begin{equation}\label{eq:linSDE}
dX(t)=\big(a_1(t)X(t)+a_2(t)\big)~dt+b(t)\star dW(t),\quad t\in\I,
\end{equation}
with normally distributed initial values.  For this equation, it is clear that the complete time evolution of the distributions can be completely described by a system of two ODEs for $U_1=\mu=\E X$ and $U_2=\sigma^2=\E\left(\left(X-\mu\right)^2\right)$, namely
\begin{equation}\label{eq:linear-closed}
	\begin{cases}
	dU_1/dt &= a_1(t)\; U_1 + a_2(t),\\
	dU_2/dt &= 2a_1(t)\; U_2 + b(t)^2.
\end{cases}
\end{equation}
As a consequence, the matching operator \eqref{eq:AnsatzVertBerechideal} corresponds to the reconstruction of the normal distribution corresponding to the given macroscopic state.

\subsection{Conjecture for general distributions\label{sec:prop-proj-conj}}

We now turn to more general distributions.  We assume that the distribution of the random variable is uniquely determined by its moments; this is the case if, for instance, the moment generating function $\sum_{i=0}^\infty\frac{\E(Z^i)t^i}{i!}$ is bounded in an interval around $0$.

In this setting, we propose the following conjecture:
\begin{conjecture}
Consider a sequence of restriction operators $\left({\overline \Q_{[L]}}\right)_{L=1,2,\dots}$ in which the macroscopic state variables corresponding to $\overline \Q_{[L]}$ are defined as the first $L$ centralized moments of the distribution, i.\,e., $u_1(z)=z$, $u_l(z)=(z-U_1)^l$, for $l=2,\ldots, L$, and define the corresponding sequence of matching operators $\left(\overline{\PO}_{[L]}\right)_{L=1,2,\dots}$ via \eqref{eq:AnsatzVertBerechideal}.
Consider further a set $S$ of random variables for which all (centralized) moments of its members exist and uniquely determine the corresponding distribution function, and each moment can be uniformly bounded. Then, the sequence of matching operators is continuous for $S$ and all sequences of macroscopic states $\U_{[L]}$, and consistent for all sequences of triples $\left(Z_{[L]}^*,Z_{[L]}^+,\U_{[L]}\right)_{L=1,2,\dots}$ where $Z_{[L]}^*,Z_{[L]}^+\in S$.
\end{conjecture}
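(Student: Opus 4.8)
The plan is to pass, exactly as in the normal case, from the variational definition \eqref{eq:AnsatzVertBerechideal} to a more tractable description of the matched random variable, and only then to separate the two assertions. Since the objective in \eqref{eq:AnsatzVertBerechideal} constrains the law of $Z^*$ only through its first $L$ centralized moments, minimizing over couplings for a fixed target law yields the quadratic Wasserstein cost and the monotone rearrangement; hence, up to the usual genericity caveats (atomlessness of $\mathrm{law}(Z)$, etc.), $\overline{\PO}_{[L]}(Z,\U)$ has the law obtained by projecting $\mathrm{law}(Z)$, in quadratic Wasserstein distance, onto the convex set of distributions whose first $L$ centralized moments equal $\U$, and concretely $\overline{\PO}_{[L]}(Z,\U)=\psi(Z)$ for the increasing function $\psi$ solving the Lagrange system obtained from \eqref{eq:proj1} in the $\Eind\to\infty$ limit. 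Everything then reduces to how this projection depends on the target moments $\U$ (continuity) and on the source law (consistency). The technical backbone I would isolate as a lemma: if $W$ and $\widetilde W$ have first $L$ moments agreeing and all moments bounded by the uniform bound for $S$, then for every $g\in C_P^{0,2}(\I\times\R^d,\R)$ one has $|\E g(s,W)-\E g(s,\widetilde W)|\le\delta_L$ with $\delta_L\to0$ independent of $W$, $\widetilde W$, $s$. This is proved by writing $g(s,\cdot)$ on an interval $[-R_L,R_L]$, with $R_L\to\infty$ chosen slowly, as a polynomial $p$ of degree $<L$ plus a Jackson remainder, using $\E p(W)=\E p(\widetilde W)$ because the first $L$ moments coincide, and estimating the remainder and the tails by the uniform moment bounds (the tail of $|p|$ being killed by $R_L^{-(L-1)}$ while $\E|W|^{L-1}$ stays bounded because it is a moment of order $\le L$).

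For \textbf{consistency}, apply this lemma with $W=\overline{\PO}_{[L]}(Z^*_{[L]},\U_{[L]})$ and $\widetilde W=\overline{\PO}_{[L]}(Z^+_{[L]},\U_{[L]})$: by construction the first $L$ centralized moments of both equal $\U_{[L]}$, which (assuming, as in the normal-case corollaries, that $\U_{[L]}$ is consistent with a distribution carrying the uniform bounds of $S$) are controlled uniformly over the admissible triples; moreover each projection has bounded second moment, since the distribution with moment sequence $\U_{[L]}$ lies in the constraint set, so its quadratic Wasserstein distance to $\mathrm{law}(Z^\pm_{[L]})$ is controlled. The lemma then gives $|\E g(s,\overline{\PO}_{[L]}(Z^*_{[L]},\U_{[L]}))-\E g(s,\overline{\PO}_{[L]}(Z^+_{[L]},\U_{[L]}))|\le 2\delta_L\to0$, uniformly; in fact both projections converge weakly to the unique distribution with moment sequence $\U_{[L]}$, since each fixed-order moment eventually equals the corresponding entry of $\U$. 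The remaining point — and, I expect, one of the reasons this is stated as a conjecture — is to upgrade this uniform smallness to the exact bound $\le C_L\,|\E g(s,Z^*_{[L]})-\E g(s,Z^+_{[L]})|$ with $C_L\to0$: one must handle the degenerate situation where the right-hand side vanishes while the left-hand side does not, which means either establishing a complementary input-Lipschitz estimate $|\E g(s,W)-\E g(s,\widetilde W)|\le C\,|\E g(s,Z^*)-\E g(s,Z^+)|$ (which cannot hold with a fixed $C$ in general), or restricting the admissible sequences of triples to those actually produced by \Cref{algo:accel}, where $Z^*_{[L]}$ and $Z^+_{[L]}$ are close in a stronger sense.

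For \textbf{continuity}, the object to bound is $|\E g(s,\overline{\PO}_{[L]}(Z,\U^*_{[L]}))-\E g(s,\overline{\PO}_{[L]}(Z,\U^+_{[L]}))|$, which by the Wasserstein description and the local Lipschitz behaviour of $g$ is controlled by the quadratic Wasserstein distance between the two projections of $\mathrm{law}(Z)$ onto the nearby constraint sets; so it suffices to bound that distance by $C\,\|\U^*_{[L]}-\U^+_{[L]}\|$ with $C$ independent of $L$. I would obtain this from a sensitivity analysis of the Lagrange system defining the monotone map $\psi$: the implicit function theorem expresses $\partial\psi/\partial\U$ through the inverse of a matrix built from the (Hankel-type) moment matrix of $\mathrm{law}(Z)$, and differentiating $\E g(s,\psi(Z))$ and integrating against $\mathrm{law}(Z)$ then yields the desired Lipschitz estimate. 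The hard part — and the reason the general statement is a conjecture rather than a theorem — is uniformity in $L$: the relevant moment matrices are generically increasingly ill-conditioned as $L\to\infty$, so the naive constants from the implicit function theorem blow up, and one has to play this blow-up off against the decay of the weights that any admissible norm $\|\cdot\|$ on the macroscopic states must have in order to stay bounded on component-wise bounded sequences. Showing that a suitable choice of that norm makes the two effects cancel, uniformly over $S$, is the crux; the normal case in \Cref{lem:normproject} sidesteps it entirely, because there the minimizer is the explicit affine map \eqref{eq:Ynpsend} and the relevant family of distributions is two-parameter.
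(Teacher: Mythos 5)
The statement you are asked to prove is labeled a \emph{conjecture} in the paper, and the authors explicitly do not prove it: \cref{sec:prop-proj-conj} only offers two families of numerical experiments (moment errors and Kolmogorov--Smirnov tests as $L$ grows; stress error as a function of $\Dt$) as supporting evidence. So there is no paper proof to compare against; what you have written is a research plan, and you yourself flag it as such. With that caveat, a few remarks on the soundness of the plan.

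Your central reduction, that $\overline{\PO}_{[L]}(Z,\U)$ is a quadratic-Wasserstein projection realized by a monotone map $\psi(Z)$, is not automatic and deserves more care than ``up to the usual genericity caveats.'' The variational problem \eqref{eq:AnsatzVertBerechideal} minimizes $\E\big((Z^*-Z)^2\big)$, and whether this coincides with $W_2(\mathrm{law}(Z^*),\mathrm{law}(Z))$ depends on whether $Z^*$ is allowed to range over all random variables on a sufficiently rich space (so that the infimum over couplings is attainable) or is implicitly restricted to the Lagrangian form $Z^*=Z+\sum_{l\le L}\lambda_l\,u_l'(Z)$ coming from \eqref{eq:proj1}, which is a fixed polynomial perturbation of the identity and need not be monotone for $L>2$. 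The paper's own \cref{lem:normproject} exhibits exactly this subtlety: even in the Gaussian case, \eqref{eq:Ynpsend} has a $\pm$ ambiguity, and only one branch is the monotone rearrangement. If you want the Wasserstein picture, you should either prove that the two formulations agree for the class $S$, or commit to one and redo the argument.

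Your polynomial-approximation lemma (matching the first $L$ moments plus uniform moment bounds forces $|\E g(s,W)-\E g(s,\widetilde W)|\le\delta_L\to0$) is a sensible device and is essentially the classical argument behind ``moments determine the distribution'' under a Carleman-type condition. But, as you correctly observe, it yields an \emph{additive} bound, whereas \cref{def:consistencyprojection} demands the \emph{multiplicative} bound $\le C_L\,|\E g(s,Z^*_{[L]})-\E g(s,Z^+_{[L]})|$ with $C_L\to0$. This is the crux, and the degenerate case you point out (right-hand side zero, left-hand side not) shows the additive lemma cannot be upgraded without extra structure. Note, however, that the definition already offers the escape hatch you describe: consistency is only required for a \emph{class} of sequences of triples, and in \cref{cor:consensemblegeneration} and the convergence theorem that class is precisely the one generated by \Cref{algo:accel}, where $Z^+_{[L]}$ is an exact SDE flow of $Z^*_{[L]}$ over a short $\Dt$. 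Exploiting that structure (rather than aiming for an unconditional estimate) is probably the only viable route, and your plan would be stronger if it committed to that reduction from the start.

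On continuity, the ill-conditioning of the Hankel matrices as $L\to\infty$ is indeed the obstruction, and your observation that the choice of norm on the macroscopic states (the paper insists it remain bounded for component-wise bounded sequences) must be tuned to absorb the blow-up is the right diagnosis. Overall: your sketch is an honest and reasonably well-targeted plan, correctly identifies why the statement is a conjecture rather than a theorem, but rests on a Wasserstein/monotone-map identification that needs justification and leaves both hard steps genuinely open---as does the paper.
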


We illustrate the main properties of the matching operator for general distributions by means of numerical experiments. Below, we consider \cref{eq:fene-3d} in one space dimension, with $\kappa(t)\equiv 2$, $F(X)$ the FENE force \eqref{eq:springs} with $\fp=49$, $\We=1$. We discretize in time with the classical Euler-Maruyama scheme with time step $\dt=2\cdot 10^{-4}$, and simulate $\Eind=1\cdot 10^5$ realizations, whose initial state at time $t^0=0$ is taken from the invariant distribution of \cref{eq:fene-3d} for $\kappa(t)\equiv 0$.
As the macroscopic state $\U_{[L]}$, we consider the first $L$ \emph{even} centralized moments, 
since for the exact solution,  the odd (centralized) moments vanish due to symmetry.

\subsubsection{Error dependence on the number of moments}

We first simulate  up to time $t^*=1.15$ and record the microscopic state $\bY^*$ and corresponding macroscopic states $\U^*_{[L]}$ for $L=1,\ldots,10$, at time $t^*$, as well as the microscopic state $\bY^{-}$ at time $t^-=1$.  We then project the ensemble $\bY^{-}$ onto the macroscopic state $\U^*_{[L]}$ and compare the density of $\PO_{[L]}(\bY^{-},\U^*_{[L]})$ with that of $\bY^*$. Since the absolute value of the moments increases quickly with the order of the moment, the residuals in the Newton procedure for the matching are scaled relative to the requested value of the corresponding moment; the Newton iterations are stopped if the norm of the residual is smaller than $1\cdot 10^{-9} $.

We perform three tests to examine the convergence (in empirical density) of $\PO_{[L]}(\bY^{-},\U^*_{[L]})$ to $\bY^*$ for $L\to\infty$.  First, we visually inspect the corresponding empirical probability density functions, see \cref{fig:conjecture-distr-1left}.
\newlength{\figwidth}
\setlength{\figwidth}{0.485 \textwidth}
\begin{figure}[tbp]\hspace*{\fill}
\subfigure[Empirical probability density functions\label{fig:conjecture-distr-1left}]{\includegraphics[scale=0.9]{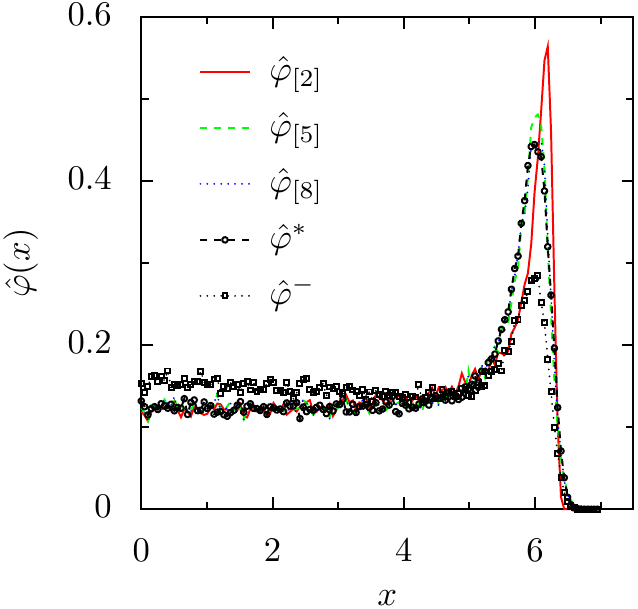}}
\subfigure[Error of the $l$-th moment as a function of $l$\label{fig:conjecture-distr-1right}]{\includegraphics[scale=0.9]{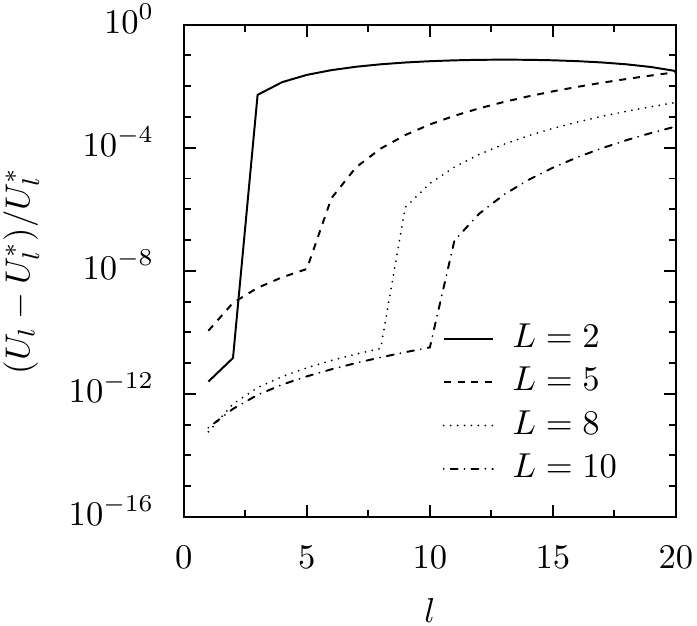}}
\hspace*{\fill}
\caption{\label{fig:conjecture-distr-1}Results after projecting a prior ensemble of FENE dumbbells onto the first $L$ even centralized moments of a reference ensemble for several values of $L$. Simulation details are given in the text.}
\hspace*{\fill}
\end{figure}
Shown are histogram approximations of the empirical density $\hat{\varphi}^{-}$ of $|\bY^-|$ (the initial condition for the matching), the reference empirical density $\hat{\varphi}^*$ of $|\bY^*|$, and approximations $\hat{\varphi}_{[L]}$ of $\PO_{[L]}(\bY^-,\U_{[L]})$ for several values of $L$. The figure visually suggests that, when increasing the number of macroscopic state variables, the reference empirical density gets approximated more accurately. We now take a closer look to the projected ensembles by computing the relative difference between the $l$-th even empirical moment of the projected ensemble, $U_l$, and the corresponding empirical moment of the reference ensemble $U^*_l$ as $(U_l-U^*_l)/U^*_l$. \Cref{fig:conjecture-distr-1right} shows this error as a function of $l$ for different values of the number of macroscopic state variables $L$.
\begin{table}
\begin{tabular}{|c||c|c|c|c|c|c|c|c|c|}
		\hline
		$L$ &2 &3 &4 & 5 & 6 & 7 & 8 & 9 & 10 \\\hline
		$p$ & $0.000$ & $0.000$ & $1.197\cdot 10^{-3}$ & $0.840$ & $0.862$ & $0.999$  & $0.999$ & $0.999$  & $0.999$ \\
		\hline
\end{tabular}
 \caption{\label{fig:conjecture-distr-2}The p-values of a two-sample Kolmogorov--Smirnov test that compares the reference and projected empirical distributions. Simulation details are in the text.}
\end{table}

We make two key observations. First, for $l<L$, the relative difference in the corresponding moment is of the order of the tolerance of the Newton procedure.  This is expected, since these are the macroscopic state variables onto which the distribution is projected.  We see that this very small error increases nevertheless with $l$; this can be explained by pointing out that the value of the moments increases very quickly with $l$, and that the equations in the Newton procedure have been rescaled accordingly.
Second, the error in the higher moments ($l>L$) also decreases with increasing $L$.  This indicates that the convergence of $\hat{\varphi}_{[L]}$ to $\hat{\varphi}^{*}$ for $L\to \infty$ is not only due to the fact that we project onto more moments, but also because the approximation of the higher order moments improves. So far, we have no complete theoretical justification for this observation.

Finally, we compare the reference and projected empirical distributions using a two-sample Kolmogorov--Smirnov test \cite{Lehn85eid}; this classical hypothesis test results in a high p-value ($\leq1$) if the two samples are likely to have been drawn from the same probability distribution. The results are shown in \cref{fig:conjecture-distr-2}.  We clearly see a $p$-value that approaches $1$ for increasing $L$.

\subsubsection{Error dependence on the time step}

In a next experiment, we simulate up to time $t^*=1.54$, and record the microscopic state $\bY^{-}$ at time $t^-=1.5$, as well as the  macroscopic states $\U_{[L]}(t^-+\Dt)$ for $L=3,4,5$, and the function of interest $\tilde{\tau}_p(t^-+\Dt)$ for $\Dt \in [0,t^*-t^-]$. We then project the ensemble $\bY^{-}$ onto the macroscopic state $\U_{[L]}(t^-+\Dt)$, obtaining $\PO_{[L]}\big(\bY^{-},\U_{[L]}(t^-+\Dt)\big)$, and  denote the corresponding value of the stress as $\hat{\tau}_p(t^-+\Dt)$. We record the relative error $|\hat{\tau}_p(t)-\tilde{\tau}_p(t)|/\tilde{\tau}_p(t)$ as a function of $\Dt$.
To reduce the statistical error, we report the averaged results of $100$ realizations of this experiment. The results are shown in \cref{fig:projection-Dt}, where we used $\epsilon=1$ in \eqref{eq:kramers}.
\begin{figure}
\begin{center}
	\includegraphics[scale=0.8]{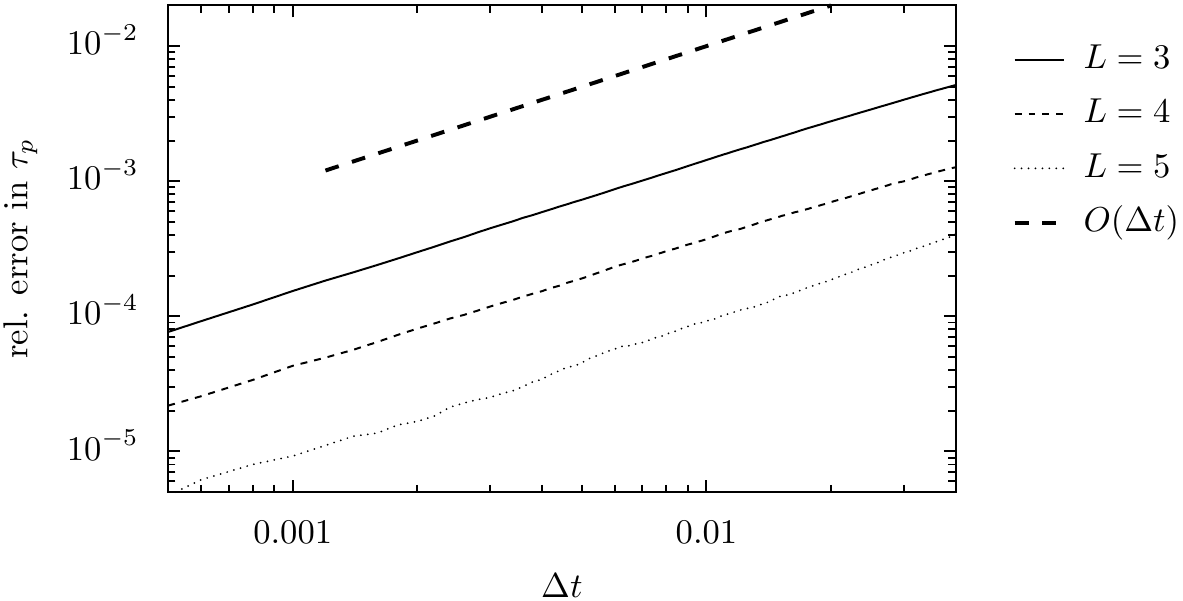}
\end{center}
\caption{\label{fig:projection-Dt}Error of the stress after projecting a prior ensemble of FENE dumbbells onto the first $L$ even centralized moments of a reference ensemble, as a function of $\Dt$ for several values of $L$. Simulation details are given in the text.}
\end{figure}
We indeed see the linear increase of the matching error as a function of $\Dt$; notice also, as was shown in the previous experiment, that the matching error decreases with increasing $L$.  From the figure, we conclude that the remaining statistical error is at least lower than $10^{-5}$.

\section{Extrapolation operator\label{subsec:accel-extrap}}

Next, we need to specify how the extrapolation is performed. Let the order of consistency be defined as follows:

\begin{defi}[Consistency of extrapolation]
Consider a certain class of sufficiently smooth functions.
An extrapolation operator ${\mathcal E}$ is called consistent of order $\pe>0$ for this class if there exist $\Dt_0>0$ and $C$ such that for all $\Dt\in[0,\Dt_0]$, all $\tind\leq\Tind$ and all functions $U$ in the considered class it holds
\begin{equation}\label{eq:ConsistencyExtrap}
\left\|	\tilde{U}^{\tind+1}-U(t^{\tind+1})\right\|\le C \Dt^{p_e+1},
\end{equation}
with
\[
\tilde{U}^{\tind+1} = {\mathcal E}\left(\left(U(t^{i,k}) \right)_{i,k=0,0}^{\tind,K};(\Dt_i)_{i=0}^\tind, \dt \right).
\]
\end{defi}
Further, we will also use the following definition of continuity:
\begin{defi}[Continuity of extrapolation]\label{def:continuityextrapolation}
Consider a certain class of sufficiently smooth functions.
An extrapolation operator ${\mathcal E}$ consistent of order $\pe>0$ for this class is called continuous for this class, if there exist $\Dt_0>0$ and $C$ such that for all $\Dt\in[0,\Dt_0]$, all $\tind\leq\Tind$, and all functions $U_1,U_2$ in the considered class it holds
\begin{equation}\label{eq:ContinuityExtrap}
\begin{split}
\|{\mathcal E}\left(\left(U_1(t^{i,k}) \right)_{i,k=0,0}^{\tind,K};(\Dt_i)_{i=0}^\tind, \dt \right)
-{\mathcal E}\left(\left(U_2(t^{i,k}) \right)_{i,k=0,0}^{\tind,K};(\Dt_i)_{i=0}^\tind, \dt \right)\|\\
\leq C\left(\frac{\Dt}{\dt}\right)^{\pe}\sum_{i,k=0,0}^{\tind,K}\|U_1(t^{i,k})-U_2(t^{i,k})\|.
\end{split}
\end{equation}
\end{defi}
In the remainder of this section, we consider two extrapolation strategies: projective extrapolation (\Cref{sec:proj-extrap}) and multistep state extrapolation (\Cref{sec:msem-extrap}). Numerical illustrations are given in \Cref{sec:num-extrap}.

\subsection{Projective extrapolation\label{sec:proj-extrap}}

The first approach that we consider, proposed in \cite{Gear:2003p2171}, is to extrapolate $\U$ to time $t^{n+1}$ using only (some of) the time points $t^{n,k}$, $k=0,\dots,K$, i.\,e., by using the sequence of points obtained in the last burst of microscopic simulation. If this \emph{coarse projective extrapolation} is based on the interpolating polynomial of degree $\pe$ (with $\pe\leq K$) through the parameter values at times $t^{{\tind},k}$, $k=K-\pe,\dots,K$, we obtain
 \begin{align}\label{eq:PIextrap}
 \U^{{\tind}+1}=\sum_{s=0}^{\pe} l_s(\alpha_{\tind})\U^{{\tind},K-s},
 \end{align}
 with \begin{equation}\label{eq:alpha}
 \alpha_{\tind}=\frac{\Dt_{\tind}}{\dt}-K,
 \end{equation} and the Lagrange polynomials
 \begin{equation}\label{eq:lsm}
l_s(\alpha)=\frac{\alpha(\alpha+1)\cdots(\alpha+{\pe})}{s!({\pe}-s)!(-1)^s(\alpha+s)}.
 \end{equation}
Clearly, \eqref{eq:PIextrap} is continuous in the sense of \cref{def:continuityextrapolation}.

\begin{example}[Coarse projective forward Euler]\label{ex:coarseprojectiveforwardEuler}
The simplest, first order version of the above method is called coarse projective forward Euler. In this case, the procedure can be rewritten as
\begin{equation}\label{eq:cpfe}
	\U^{{\tind}+1} = \U^{{\tind},K}+(\Dt_{\tind} - K\dt)\bs{\overline{\Hc}}^{\tind}, \qquad\bs{\overline{\Hc}}^{\tind} = \frac{\U^{{\tind},K}-\U^{{\tind},K-1}}{\dt}.
\end{equation}

\end{example}

The procedure described above is reminiscent of a Taylor method \cite{Gear:2003p2171}.
Consequently, time integration based on this extrapolation will resemble a Taylor method when repeatedly extrapolating forward in time, and the global deterministic error will be dominated by a term of the form $C\Dt^{p_e}$ (assuming $\dt \ll \Dt$), as results from an accuracy analysis of coarse projective integration for deterministic microscopic models \cite{Vandekerckhove:2008p891}.

To assess qualitatively the influence of coarse projective extrapolation on the statistical error, we apply it to the linear test equation
\begin{equation}\label{eq:staterrorlinSDE}
dX(t)=a X(t)~dt+b dW(t).
\end{equation}
Application of the one step method $\os$ to \eqref{eq:staterrorlinSDE} yields 
\begin{align*}
Y^{{\tind},k}=&\tRos(a,\dt,\eta^{{\tind},k-1})Y^{{\tind},k-1}+\tSos(a,b,\dt,\eta^{{\tind},k-1}),
\end{align*}
where $\tRos$ and $\tSos$ are functions depending on $\os$ and $\eta^{{\tind}+1,k-1}$ are (vectors of) the i.\,i.\,d.\ random variables used by $\os$.
In the following, we assume that $\tRos(a,\dt,\eta^{{\tind},i})$ is independent of $\eta^{{\tind},i}$ and can be written as $\tRos(a,\dt,\eta^{{\tind},i})=\Ros(a\dt)$; this holds, e.\,g., for typical Runge-Kutta methods.
The above assumptions imply
\begin{align*}
Y^{{\tind},k}=&\Ros(a\dt)^kY^{{\tind},0}+\sum_{i=0}^{k-1}\tSos(a,b,\dt,\eta^{{\tind},i})\Ros(a\dt)^{k-i-1},\\
\E Y^{{\tind},k}=&\Ros(a\dt)^k\E Y^{{\tind},0}+\E\tSos(a,b,\dt,\eta^{{\tind},0})\sum_{i=0}^{k-1}\Ros(a\dt)^{i}.
\end{align*}
If we now apply the extrapolation step \eqref{eq:PIextrap}, we obtain
\begin{align}\nonumber
&\E Y^{{\tind}+1,0}=\sum_{s=0}^{\pe}l_s(\alpha)\E Y^{{\tind},K-s}\\\label{eq:staterrorlinsdeonesteppiassump}
=&\underbrace{\left(\sum_{s=0}^{\pe}l_s(\alpha)\Ros(a\dt)^{K-s}\right)}_{=:R
_{\E}(a\dt)}\E Y^{{\tind},0}+\E\tSos(a,b,\dt,\eta^{{\tind},0})\sum_{s=0}^{\pe}l_s(\alpha)\sum_{i=0}^{K-s-1}\Ros(a\dt)^{i},
\end{align}
with $\alpha$ and $l_s(\alpha)$ given by \eqref{eq:alpha} and \eqref{eq:lsm}.
In analogy to \eqref{eq:staterrorlinsdeonesteppiassump} we obtain also
\begin{align*}
\hE Y^{{\tind}+1,0}=R_{\E}(a\dt)\hE Y^{{\tind},0}+\sum_{s=0}^{\pe}l_s(\alpha)\sum_{i=0}^{K-s-1}\Ros(a\dt)^{i}\hE\tSos(a,b,\dt,\eta^{{\tind},i}).
\end{align*}
Thus $\E\hE Y^{{\tind}+1,0}=\E Y^{{\tind}+1,0}$, but
\begin{align}\nonumber
\Var\hE Y^{{\tind}+1,0}=&\frac1{\Eind}R_{\E}(a\dt)^2\Var Y^{{\tind},0}\\\label{eq:VarhECPI}
&+\frac1{\Eind}\Var\tSos(a,b,\dt,\eta^{{\tind},0})\sum_{i=0}^{K-1}\Ros(a\dt)^{2i}\left(\sum_{s=0}^{\min\{\pe,K-1-i\}}l_s(\alpha)\right)^2.
\end{align}
The second summand in \eqref{eq:VarhECPI} behaves as $\dt \alpha^{2\pe}/\Eind$ for large $\alpha=\Dt/\dt - K$.  Assuming $\Dt \gg \dt$, this results in an amplification of the statistical error with a factor
$\Dt^{p_e}/\dt^{p_e}$ during extrapolation. A natural question is then: how many realizations $\tilde{\Eind}$ are needed to obtain the same variance using a fully microscopic simulation? In that case, we have
\begin{align}\nonumber
\Var\hE Y^{{\tind},\alpha+K}=&\frac1{\tilde\Eind}\Ros(a\dt)^{2(\alpha+K)}\Var Y^{{\tind},0}\\\label{eq:VarhECPIfull}
&+\frac1{\tilde\Eind}\Var\tSos(a,b,\dt,\eta^{{\tind},0})\sum_{i=0}^{\alpha+K-1}\Ros(a\dt)^{2i}.
\end{align}
Thus, for large $\alpha$, the required number of realizations for a full microscopic simulation is smaller by a factor $1/\alpha^{2\pe-1}$, i.e, $\tilde{\Eind}\sim\frac{\Eind}{\alpha^{2\pe-1}}$, whereas the computational costs per realization increases by a factor $\alpha$. This means that for large $\alpha$ and $\pe=1$, the computational cost of the micro/macro acceleration technique with coarse projective extrapolation is similarly to that of a full microscopic simulation for a given variance.  For large $\alpha$ and $\pe>1$, coarse projective extrapolation is even more expensive than a full microscopic simulation.


To reduce statistical error, it has been proposed to use a chord based approximation, for instance, using $\U^{{\tind},K-K_1}$ for the time derivative estimate instead of $\U^{{\tind},K-1}$ in equation~\eqref{eq:cpfe} \cite{RicoGearKevr04}. Instead of taking Lagrange polynomials in equation~\eqref{eq:PIextrap}, we then have
\begin{equation}\U^{n+1}=\sum_{s=0}^{K}l_s(\alpha_n)\U^{n,K-s},
\end{equation}
in which $l_{K}(\alpha)=1+\frac{\alpha}{K-K_1}$, $l_{K_1}(\alpha)=-\frac{\alpha}{K-K_1}$, and $l_s(\alpha)=0$ otherwise (see \cref{ex:coarseprojectiveforwardEuler}) reduces the variance by a factor $1/(K-K_1)$.  However, the conclusion on the computational cost remains the same.

\subsection{Multistep state extrapolation\label{sec:msem-extrap}}

Because of the amplification of statistical error, we look into alternative extrapolation strategies.  One approach, proposed in \cite{SOMMEIJER:1990p2657,vandekerckhove07nsa}, is to extrapolate $\U$ to time $t^{{\tind}+1}$ using only (some of) the time points $t^{i,K}$, $i=1,\dots,{\tind}$, i.\,e., by using \emph{the last point of each sequence of microscopic simulations}, instead of a sequence of points from the last microscopic simulation. If this \emph{multistep state extrapolation method} is based on the interpolating polynomial of degree ${\pe}$ through the parameter values at times $t^{i,K}$, $i={\tind}-{\pe},\dots,{\tind}$, and we assume equidistant coarse time steps $\Dt$, we obtain
 \begin{align}\label{eq:MSextrap}
 \U^{{\tind}+1}=\sum_{s=0}^{\pe}l_s(\beta)\U^{{\tind}-s,K},
 \end{align}
 where
 \begin{equation}\label{eq:beta}
 \beta=\frac{\alpha}{\alpha+K}
 \end{equation} is the fraction of the interval $\Dt$ over which we extrapolate, and $\alpha$ and $l_s$ are defined as in \eqref{eq:alpha} and \eqref{eq:lsm}. Note that such an extrapolation strategy requires a separate starting procedure.

For a detailed comparison of the accuracy and stability properties of acceleration of the numerical integration of ODEs using projective extrapolation and multistep state extrapolation, we refer to \cite{Vandekerckhove:2008p891,vandekerckhove07nsa}.
Here, we only remark that, while the local error using multistep state extrapolation only differs by a factor two with respect to the local error of projective extrapolation, the global error is affected quite significantly.  This is due to the fact that, when increasing $\Dt$, also the time derivative estimate itself is taken over a larger time interval.  It has been shown in \cite{SOMMEIJER:1990p2657,Vandekerckhove:2008p891} that this results in an error constant (see, e.\,g., \cite[Section III.2]{HairerWanner}) of the form $C\alpha\Dt^{p_e}$. This amplification effect will be illustrated in \cref{sec:num-extrap}.

Let us now look into the statistical error, again using the linear test equation \eqref{eq:staterrorlinSDE}. One extrapolation step of coarse multistep state extrapolation yields
\begin{align*}
\hE Y^{{\tind}+1,0}=&\sum_{s=0}^{\pe}l_s(\beta)\hE Y^{{\tind}-s,K}.
\end{align*}
Thus again $\E\hE Y^{{\tind}+1,0}=\E Y^{{\tind}+1,0}$, but now
\begin{align*}
\Var\hE Y^{{\tind}+1,0}\leq\frac1{\Eind}\max_{s=0}^{\pe}\Var Y^{{\tind}-s,K}\left(\sum_{s=0}^{\pe}|l_s(\beta)|\right)^2.
\end{align*}
As $\beta<1$, the last factor can be bounded (independently of $\alpha$). Consequently, the amplification of statistical error during the extrapolation does not depend on $\alpha$, whereas the corresponding computational costs per simulation path are reduced by a factor $\alpha$ compared to a full microscopic simulation.

\subsection{Numerical illustration\label{sec:num-extrap}}

We now provide a numerical result to illustrate the effects of extrapolation on the deterministic and statistical error.  To avoid effects of the matching step, we consider the linear equation \eqref{eq:linSDE} with $a_2(t)=-a_1(t)=b(t)\equiv 1$, for which we know that macroscopic evolution closes in terms of the first two moments of the distribution.  This microscopic SDE is discretized using an Euler-Maruyama scheme with $\delta t=2\cdot 10^{-4}$. We consider $500$ realizations of a computational experiment with $J=1000$ SDE realizations. As an initial condition, we sample from a standard normal distribution. We compare the sample mean behavior and sample standard deviation of a full microscopic simulation (which we will call the reference simulation) with the micro/macro acceleration algorithm using  $\Delta t=1\cdot 10^{-3}$, $2\cdot 10^{-3}$, $4\cdot 10^{-3}$, and $8\cdot 10^{-3}$. The function of interest is chosen to be $\bar{f}(t)=\E(X(t)^2)$. We denote by $\tilde{f}(t)$ the approximation to the function of interest calculated from one realization of the reference simulation using $J$ SDE realizations, and by $\hat{f}(t)$ the function of interest obtained via one realization of the micro/macro acceleration technique. As extrapolation techniques, we use first order projective extrapolation and first and second order multistep state extrapolation.

\Cref{fig:stat-lin-proj} shows the results for first order projective extrapolation.
\begin{figure}
\begin{center}
	\includegraphics[width=\linewidth]{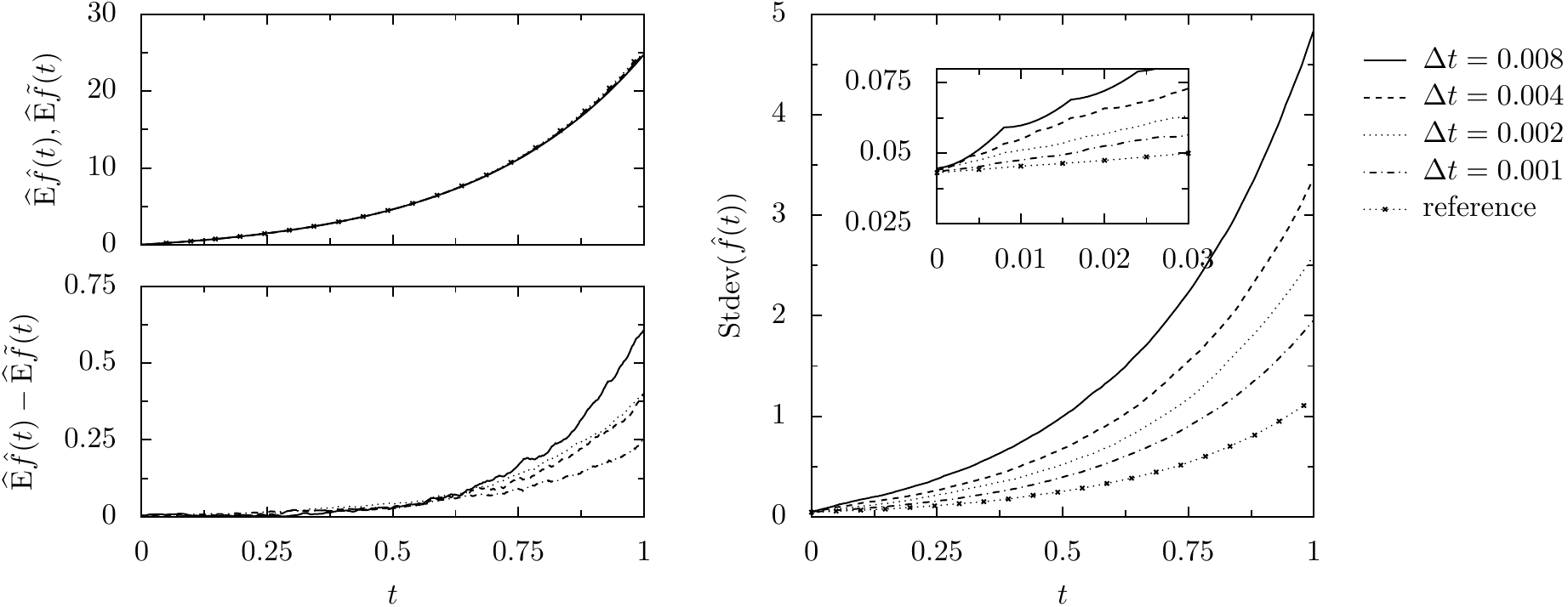}
	\end{center}
\caption{\label{fig:stat-lin-proj}Results of micro/macro acceleration of the linear equation \eqref{eq:linSDE} using $L=2$ moments and projective extrapolation for different values of the time step $\Delta t$, as well as a  full microscopic (reference) simulation. Top left: evolution of the sample means of the function of interest $\tilde{f}$ and $\hat{f}$. Bottom left: deterministic error on $\hat{f}$. Right: evolution of the sample standard deviation of $\hat{f}$. Simulation details are given in the text.}
\end{figure}
The left figure clearly shows, as expected, that the deterministic error grows with increasing $\Delta t$. However, from the right figure follows that, for an individual realization of the experiment, the error is dominated by the statistical error.  The zoom shows that, for small $t$, the sample standard deviation grows linearly as a function of time, with a slope that is larger for larger $\Delta t$.  This is in agreement with the theoretical result on the local propagation of statistical error.

Next, we look at first order multistep state extrapolation, for which the results are shown in \cref{fig:stat-lin-msem-1}.
\begin{figure}
\begin{center}
	\includegraphics[width=\linewidth]{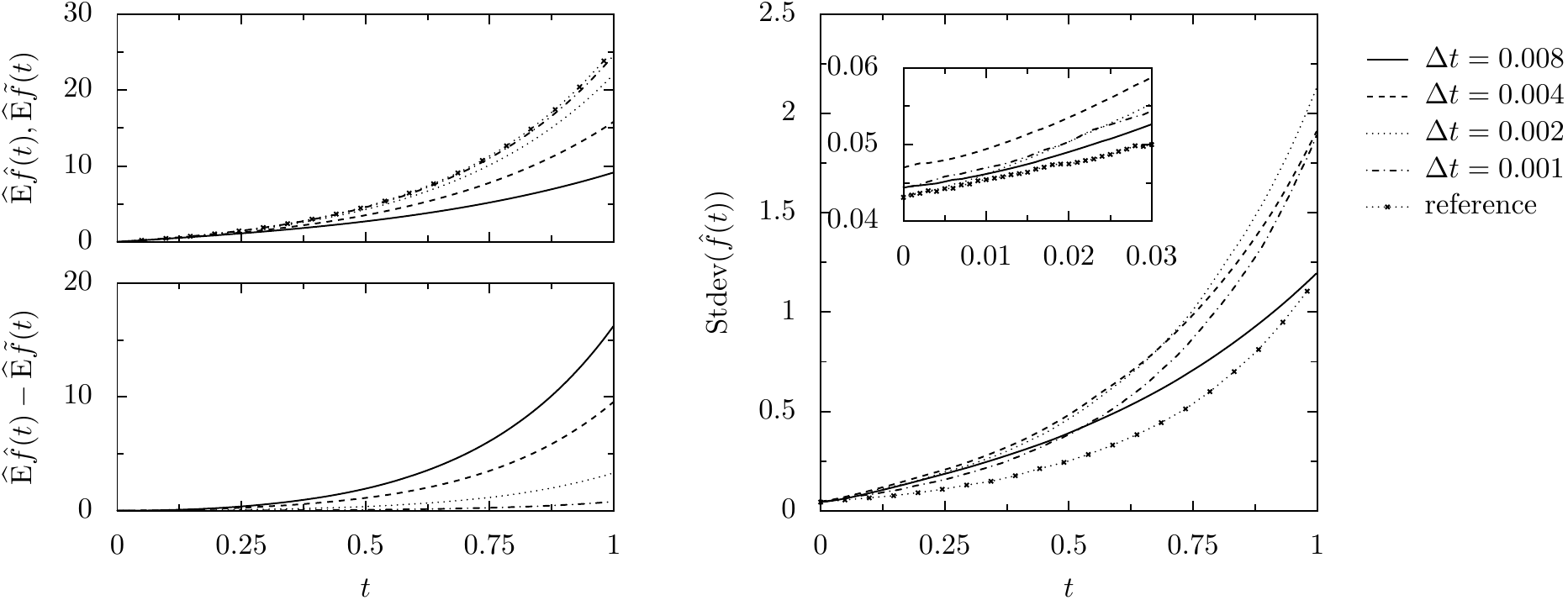}
	\end{center}
\caption{\label{fig:stat-lin-msem-1}Results of micro/macro acceleration of the linear equation \eqref{eq:linSDE} using $L=2$ moments and first order multistep state extrapolation for different values of the time step $\Delta t$, as well as a  full microscopic (reference) simulation.  Top left: evolution of the sample means of the stresses $\tilde{f}$ and $\hat{f}$. Bottom left: deterministic error on $\hat{f}$. Right: evolution of the sample standard deviation of $\hat{f}$. Simulation details are given in the text.}
\end{figure}
The left figure indicates that, when comparing with projective extrapolation, the deterministic error grows much more rapidly with increasing $\Delta t$. On the right, we see that, while the sample standard deviation is larger than for the reference simulation, the sample standard deviation does not depend crucially on $\Delta t$, as is also expected from the analysis of the local propagation of statistical error.  Note that the lower sample standard deviation for $\Delta t=8\cdot 10^{-3}$ is related to the fact that $\hat{f}(t)$ itself is much lower as a consequence of the large deterministic error (see left figure). The zoom shows that, for small $t$, the statistical error grows linearly as a function of time, with a slope that is independent of $\Delta t$, and is identical to the slope for the reference simulation.  These results are in agreement with the theoretical result on the local propagation of statistical error.

Finally, we consider second order multistep state extrapolation.  The results are shown in \cref{fig:stat-lin-msem-2}.
\begin{figure}
\begin{center}
	\includegraphics[width=\linewidth]{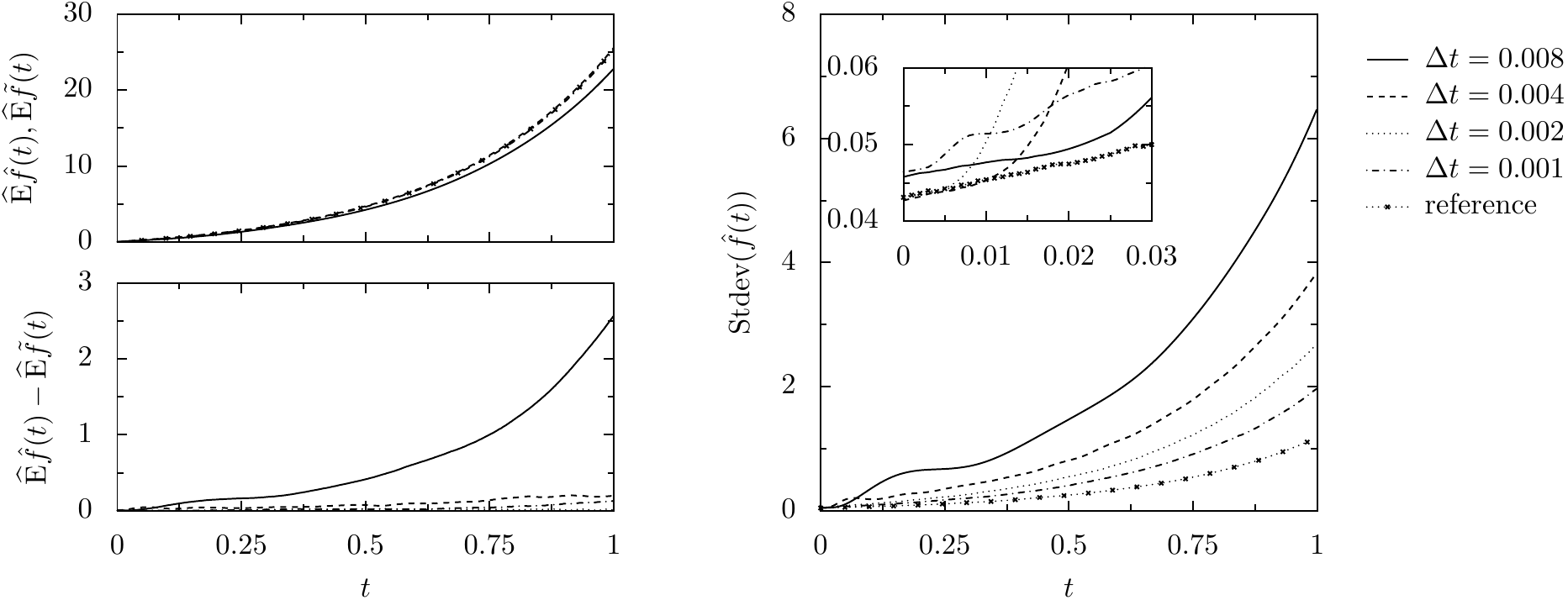}
	\end{center}
\caption{\label{fig:stat-lin-msem-2}Results of micro/macro acceleration of the linear equation \eqref{eq:linSDE} using $L=2$ moments and second order multistep state extrapolation for different values of the time step $\Delta t$, as well as a reference fully microscopic simulation.   Top left: evolution of the sample means of the function of interest $\tilde{f}$ and $\hat{f}$. Bottom left: deterministic error on $\hat{f}$. Right: evolution of the sample standard deviation of $\hat{f}$. Simulation details are given in the text.}
\end{figure}
The left figure shows that the deterministic error is much better than for the first order version.  However, the behavior of the statistical error is more intriguing. When zooming in to the behavior for small $t$, we observe that, over a short time interval, the sample standard deviation  using the micro/macro acceleration technique increases at the same rate as the sample standard deviation in the reference simulation. This corresponds to the theoretical result on the local propagation of statistical error. However, on longer time scales, the sample standard deviation for large $t$ grows rapidly, and seems to be larger for larger $\Delta t$.  As such, second order multistep state extrapolation behaves similarly to first order projective integration on long time scales. We suspect that the loss of this favorable error propagation is due to accumulation effects.  This is indicated by the fact that the length of the time interval on which the local theoretical results is observed is longer for larger values of $\Delta t$.  Indeed, the figure indicates that the effects of accumulated statistical errors start to appear \emph{after a given number of extrapolations}, independently of the size of the extrapolation step.  This behavior requires additional analysis.

\section{Convergence results\label{sec:conv}}

Using the above, we are now ready to give a definition of convergence for the proposed algorithm:

\begin{defi}
Consider a sequence of restriction operators $\left(\overline{\Q}_{[L]}\right)_{L=1,2,\dots}$ and a sequence of matching operators $\left(\overline{\PO}_{[L]}\right)_{L=1,2,\dots}$, and denote the corresponding numerical approximation process obtained by using $\overline{\Q}_{[L]}$ and $\overline{\PO}_{[L]}$ in \Cref{algo:accel} by $\Y_{[L]}$, and the maximum step size by $\Dt$, $\Dt=\max_{{\tind}=1}^{\Tind}\Dt_{\tind}$. The accelerated micro/macro Monte Carlo simulation is then called weakly convergent to the solution $\X$ of SDE \eqref{eq:SDE} as $\Dt \rightarrow 0$ and $L\to\infty$ at any time $t \in {\IDt}$ with time order $p$ if for each $f \in
    C_P^{2(p+1)}(\R^d, \R)$ there exist constants  $\Dt_0>0$, $L_0$, $C_L$, and $\tC_L$, with $C_L\to0$ for $L\to\infty$, such that
\begin{equation}
| \E f\big(\Y_{[L]}(t)\big) - \E f\big(\X(t)\big) |\leq C_L+\tC_L(\Dt)^{p}
\end{equation}
holds for all $t\in\IDt$, all $L\geq L_0$, and all $\Dt\in[0,\Dt_0]$.
\end{defi}

We first discuss convergence when extrapolation is performed as in coarse projective integration (\cref{sec:conv-cpi}). Due to the multistep nature of the extrapolation, proving convergence for the multistep state extrapolation method is more involved; \cref{sec:conv-msem} contains a result for a linear SDE.

\subsection{Convergence using projective extrapolation\label{sec:conv-cpi}}

The following theorem generalizes the theorem for the convergence of one step methods due to Milstein (see \cite{milstein95nio,milstein04snf} or also \cite{debrabant08cwa}).
\begin{theorem}
Suppose the following conditions hold:
\begin{enumerate}[(i)]
\item \label{St-lg-cond1} The coefficient functions $\ab(\x)$ and $\bb^i(\x)$ (where $\bb^i$ denotes the $i$-th column of $\bb$) are continuous, satisfy a Lipschitz condition with respect to $\x$, and belong to $C_P^{p+1,2(p+1)}(\I\times\R^d, \R)$, $i=1,\dots,m$.
For non It\^{o} SDEs, we require in addition that $b^i$ is differentiable and that also ${b^i}'b^i$ satisfies a Lipschitz condition and belongs to $C_P^{p,2(p+1)}(I\times\R^d,\R)$, $i=1,\dots,m$.
\item \label{St-lg-cond2} For sufficiently large $r$ the moments $\E(\|\Y_{[L]}^{\tind,k}\|^{2r})$ exist for $k=0,\dots,K$ and ${\tind}=0,1,\ldots,\Tind$ and are uniformly bounded with respect to $L$, ${\Tind}$.
\item SDE \eqref{eq:SDE} is uniformly weakly continuous for the set of all $\Y_{[L]}^{\tind,K}$.
\item \label{St-lg-cond3} The one step method $\os$ is weakly consistent of order $\pos$.
\item \sloppy The sequence of matching operators is continuous for the numerical approximation process and all sequences of macroscopic states, and consistent for all sequences of triples $\left(\Y_{[L]}^{\tind,K},\X^{t^{\tind,K},\Y_{[L]}^{\tind,K}}(t^{\tind+1,0}),\overline{\Q}_{[L]}(\Y_{[L]}^{\tind+1,0})\right)_{L=1,2,\dots}$.
\item The extrapolation is consistent of order $\pe\geq1$ and continuous for the class of all functions $U(t)=\overline{\Q}_{[L]}\big(\X^{t^{i,k},\Y_{[L]}^{i,k}}(t)\big)$.
\end{enumerate}
Then, the micro/macro acceleration algorithm with projective extrapolation is weakly convergent with time order $p=\min\{\pe,\pos\}$.
\end{theorem}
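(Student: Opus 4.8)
\emph{Proof strategy (proposal).} The plan is to generalize Milstein's weak convergence theorem for one-step methods (cf.\ \cite{milstein95nio,milstein04snf,debrabant08cwa}): establish a one-(macroscopic-)step error bound and telescope it into a global one, the new feature being that the local error of one step of \Cref{algo:accel} splits into a microscopic-simulation contribution, an extrapolation contribution and a matching contribution, each controlled by exactly one of the hypotheses. I work throughout with the idealized operators $\overline{\PO}_{[L]},\overline{\Q}_{[L]}$ of \Cref{sec:match-restrict} and drop the subscript $[L]$. Fix $t^*=t^{\Tind^*}\in\IDt$ and $f\in C_P^{2(p+1)}(\R^d,\R)$, and put $u(t,\x)=\E f\big(\X^{t,\x}(t^*)\big)$ for $t\le t^*$. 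By hypothesis (i) and the classical regularity theory for SDEs \cite{milstein95nio,kloeden99nso}, $u(t,\cdot)\in C_P^{2(p+1)}(\R^d,\R)$ uniformly in $t$, and the same $C_P$-regularity holds for the transition functionals $\x\mapsto\E g\big(\X^{t,\x}(t')\big)$ appearing below; together with the moment bounds of hypothesis (ii) this makes the Milstein-type local estimates applicable. The flow (semigroup) property of the SDE gives $\E u(t^{\tind},\Y^{\tind})=\E u\big(t^{\tind+1},\X^{t^{\tind},\Y^{\tind}}(t^{\tind+1})\big)$, $\E u(t^0,\Y^0)=\E f(\X(t^*))$, $\E u(t^*,\Y^{\Tind^*})=\E f(\Y_{[L]}(t^*))$, hence
\begin{equation*}
\E f\big(\Y_{[L]}(t^*)\big)-\E f\big(\X(t^*)\big)=\sum_{\tind=0}^{\Tind^*-1}\Big(\E u\big(t^{\tind+1},\Y^{\tind+1}\big)-\E u\big(t^{\tind+1},\X^{t^{\tind},\Y^{\tind}}(t^{\tind+1})\big)\Big),
\end{equation*}
so it suffices to bound each summand by $C_L\Dt_{\tind}+\tC\Dt^{\,p+1}$ with $C_L\to0$ as $L\to\infty$ uniformly in $\tind$.

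For the local error I would compare $\Y^{\tind+1}=\overline{\PO}\big(\Y^{\tind,K},\U^{\tind+1}\big)$ with the exact continuation $\X^{t^{\tind},\Y^{\tind}}(t^{\tind+1})$ of the incoming state along
\[
\overline{\PO}\big(\Y^{\tind,K},\U^{\tind+1}\big)\ \to\ \overline{\PO}\big(\X^{t^{\tind,K},\Y^{\tind,K}}(t^{\tind+1}),\U^{\tind+1}\big)\ \to\ \X^{t^{\tind,K},\Y^{\tind,K}}(t^{\tind+1})\ \to\ \X^{t^{\tind},\Y^{\tind}}(t^{\tind+1}).
\]
The first step changes only the matched random variable; since $\overline{\Q}(\Y^{\tind+1})=\U^{\tind+1}$, this is exactly covered by \emph{consistency of the matching} (hypothesis (v), for the triples listed there), giving an error $\le C_L\,|\E u(t^{\tind+1},\Y^{\tind,K})-\E u(t^{\tind+1},\X^{t^{\tind,K},\Y^{\tind,K}}(t^{\tind+1}))|\le C_L\,C\,(\Dt_{\tind}-K\dt)$ by \emph{uniform weak continuity of the SDE} (hypothesis (iii)). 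The second step changes only the imposed macroscopic state; since $\overline{\PO}\big(\Z,\overline{\Q}(\Z)\big)=\Z$ by the \emph{projection property} (\Cref{def:selfconsistency}), \emph{continuity of the matching} (hypothesis (v)) bounds its error by $C\,\|\U^{\tind+1}-\overline{\Q}(\X^{t^{\tind,K},\Y^{\tind,K}}(t^{\tind+1}))\|$. The third step is the microscopic-burst error $\Y^{\tind,K}$ versus $\X^{t^{\tind},\Y^{\tind}}(t^{\tind,K})$ propagated by the smooth exact flow (via $\X^{t^{\tind},\Y^{\tind}}(t^{\tind+1})=\X^{t^{\tind,K},\X^{t^{\tind},\Y^{\tind}}(t^{\tind,K})}(t^{\tind+1})$); accumulating over the $K$ steps the local estimate of \emph{weak consistency of order $\pos\ge p$} of $\os$ (hypothesis (iv)), which on a $C_P^{2(p+1)}$-test function yields a per-step error $O(\dt^{\,p+1})$, and using $K\dt\le\Dt_{\tind}$, gives $O(K\dt^{\,p+1})=O(\Dt^{\,p+1})$.

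It remains to estimate the extrapolation defect $\|\U^{\tind+1}-\overline{\Q}(\X^{t^{\tind,K},\Y^{\tind,K}}(t^{\tind+1}))\|$. With the exact macroscopic trajectory $U^{\tind}(t):=\overline{\Q}\big(\X^{t^{\tind},\Y^{\tind}}(t)\big)$, which lies in the class of hypothesis (vi) (take $(i,k)=(\tind,0)$) and agrees with $\overline{\Q}(\X^{t^{\tind,K},\Y^{\tind,K}}(t^{\tind+1}))$ at $t^{\tind+1}$ up to an $O(\dt^{\,p+1})$ burst error, I split
\[
\U^{\tind+1}-U^{\tind}(t^{\tind+1})=\Big({\mathcal E}\big((\overline{\Q}(\Y^{i,k}))_{i,k}\big)-{\mathcal E}\big((U^{\tind}(t^{i,k}))_{i,k}\big)\Big)+\Big({\mathcal E}\big((U^{\tind}(t^{i,k}))_{i,k}\big)-U^{\tind}(t^{\tind+1})\Big).
\]
The second bracket is $O(\Dt^{\,\pe+1})$ by \emph{consistency of the extrapolation} (hypothesis (vi)). In the first bracket only the points $t^{\tind,K-s}$, $s=0,\dots,\pe$, contribute, with Lagrange weights $l_s(\alpha_{\tind})$ that stay bounded provided $\dt$ is a fixed fraction of $\Dt$ (so that $\alpha_{\tind}=\Dt_{\tind}/\dt-K$ is bounded; note $\Dt\to0$ forces $\dt\to0$ anyway, since $K\dt\le\Dt$), while each $\overline{\Q}(\Y^{\tind,K-s})-U^{\tind}(t^{\tind,K-s})$ is an $O(\dt^{\,p+1})$ burst error in the smooth observables $u_l$; by \emph{continuity of the extrapolation} (hypothesis (vi)) this bracket is $O\big((\Dt/\dt)^{\pe}\dt^{\,p+1}\big)=O(\Dt^{\,p+1})$. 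Hence the extrapolation defect, and with it the second step of the chain, contributes $O(\Dt^{\,p+1})$. Collecting the three steps, each summand of the telescoping sum is $\le C_L\,C\,\Dt_{\tind}+\tC\,\Dt^{\,p+1}$; summing over $\tind$ with $\sum_{\tind}\Dt_{\tind}=t^*-t^0$ and $\Tind^*\lesssim\Dt^{-1}$, renaming constants, and noting that $t^*\in\IDt$ was arbitrary, one obtains $|\E f(\Y_{[L]}(t))-\E f(\X(t))|\le C_L+\tC_L\Dt^{\,p}$ for all $t\in\IDt$, with $p=\min\{\pe,\pos\}$ and $C_L\to0$.

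The main work lies in the bookkeeping rather than in any single estimate: one must choose the right reference objects at each stage — the exact SDE flows $\X^{t^{\tind},\Y^{\tind}}(\cdot)$ and $\X^{t^{\tind,K},\Y^{\tind,K}}(\cdot)$ emanating from the numerical states — and verify that the $C_P$-regularity required by the Milstein-type local estimates survives composition with these flows (which uses hypothesis (i)) and that hypotheses (iii) and (v) indeed cover the auxiliary random variables entering the decomposition. The most delicate point is that the three error mechanisms must combine \emph{without degradation of order}: the extrapolation amplifies the macroscopic data error by a factor $(\Dt/\dt)^{\pe}$, so one has to keep the burst error genuinely at order $\dt^{\,\pos+1}$ and couple $\dt$ to $\Dt$ so that the amplified quantity still does not exceed $\Dt^{\,p+1}$ — which is exactly why both $\pos$ and $\pe$ enter the final order through a minimum — whereas the matching contributes only $O(C_L\Dt_{\tind})$ per step, summing to the $\Dt$-independent, $L$-vanishing term $C_L$; ensuring that $C_L$ and the weak-continuity constants are uniform over all $O(\Dt^{-1})$ macroscopic steps is what makes the final bound take the stated form $C_L+\tC_L\Dt^{\,p}$.
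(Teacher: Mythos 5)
Your proposal is correct and follows essentially the same route as the paper's own proof: a Milstein-type telescoping in the exact flow with target observable $u(t,\x)=\E f(\X^{t,\x}(t^*))$, followed by a per-macrostep decomposition into a matching-consistency contribution (controlled via \Cref{cor:consensemblegeneration} / uniform weak continuity of the SDE), a matching-continuity contribution (controlled by the extrapolation defect, itself split via consistency and continuity of ${\mathcal E}$), and the accumulated burst error of the microscopic one-step scheme, each term matched to exactly one hypothesis and summed against $\Tind\sim\Dt^{-1}$ to give $C_L+\tC_L\Dt^{p}$. The only cosmetic difference is the granularity of the telescope: you telescope at the macroscopic step level and then split each summand into a three-term chain, whereas the paper telescopes down to the microscopic step level and treats the $K$ burst errors and the matching/extrapolation transition as separate telescoping terms — the estimates invoked and the final bound are identical.
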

\begin{proof}
Let $g(s,\x):=\E\Big(f\big(\X(t^{{\tind}+1})\big)|\X(s)=\x\Big)$ for $s\in\I$, $\x\in\R^d$, and $t^{{\tind}+1}\in\IDt$ with $s \leq t^{{\tind}+1}$. Due to condition~(\ref{St-lg-cond1}) $g\in C_P^{0,2(p+1)}$ \cite{milstein95nio}. Therefore, the consistency of $\os$ implies that $g$ satisfies
\begin{equation}\label{eq:Konsuuniform}
    |\E g\big(s ,\X^{t,\x}(t+\dt)\big) -\E g\big(s ,\os(t,\x;\dt)\big)|\leq C_g(\x) \,\dt^{\pos+1}
\end{equation}
uniformly w.\,r.\,t.\ $s \in [t^0, t^{{\tind}+1}]$ for some $C_g\in C_P^{0}(\R^d,\R)$.
Analogously, \cref{cor:consensemblegeneration} and the continuity of the matching imply that $g$ satisfies
\begin{align*}
&|\E g\big(s,\X^{t^{i,K},\Y_{[L]}^{i,K}}(t^{i+1,0})\big)-\E g\big(s,\Y_{[L]}^{i+1,0}\big)|
\\&
\leq|\E g\big(s,\X^{t^{i,K},\Y_{[L]}^{i,K}}(t^{i+1,0})\big)
-\E g\bigg(s,\overline{\PO}_{[L]}\Big(\Y_{[L]}^{i,K},\overline{\Q}_{[L]}\big(\X^{t^{i,K},\Y_{[L]}^{i,K}}(t^{i+1,0})\big)\Big)\bigg)|
\\&
+|\E g\bigg(s,\overline{\PO}_{[L]}\Big(\Y_{[L]}^{i,K},\overline{\Q}_{[L]}\big(\X^{t^{i,K},\Y_{[L]}^{i,K}}(t^{i+1,0})\big)\Big)\bigg)
-\E g\Big(s,\overline{\PO}_{[L]}\big(\Y_{[L]}^{i,K},\overline{\Q}_{[L]}(\Y_{[L]}^{i+1,0})\big)\Big)|
\\&
\leq C_L\Dt+C_1\|\overline{\Q}_{[L]}\big(\X^{t^{i,K},\Y_{[L]}^{i,K}}(t^{i+1,0})\big)-\overline{\Q}_{[L]}(\Y_{[L]}^{i+1,0})\|.
\end{align*}
The last summand can be expanded as follows:
\begin{align*}
&{\|\overline{\Q}_{[L]}\big(\X^{t^{i,K},\Y_{[L]}^{i,K}}(t^{i+1,0})\big)-\overline{\Q}_{[L]}(\Y_{[L]}^{i+1,0})}\|
\\
\leq&
\|\overline{\Q}_{[L]}\big(\X^{t^{i,K},\Y_{[L]}^{i,K}}(t^{i+1,0})\big)-\overline{\Q}_{[L]}\big(\X^{t^{i,0},\Y_{[L]}^{i,0}}(t^{i+1,0})\big)\|
\\&+
\|\overline{\Q}_{[L]}\big(\X^{t^{i,0},\Y_{[L]}^{i,0}}(t^{i+1,0})\big)-{\mathcal E}\left(\left( \overline{\Q}_{[L]}\big(\X^{t^{i,0},\Y_{[L]}^{i,0}}(t^{i,k})\big) \right)_{k=0}^{K};\Dt_i, \dt \right)\|
\\&+
\|{\mathcal E}\left(\left( \overline{\Q}_{[L]}\big(\X^{t^{i,0},\Y_{[L]}^{i,0}}(t^{i,k})\big) \right)_{k=0}^{K};\Dt_i, \dt \right)-{\mathcal E}\left(\left(\overline{\Q}_{[L]}\big( \Y_{[L]}^{i,k}\big)\right)_{k=0}^{K};\Dt_i, \dt \right)
\|.
\end{align*}
Let $\tu_{[L]}(s,\x;t):=\bigg(\E\Big(g_l\big(\X(t)\big)|\X(s)=\x\Big)\bigg)_{l=1}^L$ for $s\in\I$, $\x\in\R^d$, and $t\in\IDt$ with $s \leq t$. With this definition, the continuity and consistency of the extrapolation imply
\begin{align*}
&{\|\overline{\Q}_{[L]}\big(\X^{t^{i,K},\Y_{[L]}^{i,K}}(t^{i+1,0})\big)-\overline{\Q}_{[L]}(\Y_{[L]}^{i+1,0})}\|
\\
\leq&
\|\overline{\Q}_{[L]}\big(\X^{t^{i,K},\Y_{[L]}^{i,K}}(t^{i+1,0})\big)-\overline{\Q}_{[L]}\big(\X^{t^{i,0},\Y_{[L]}^{i,0}}(t^{i+1,0})\big)\|
+C_2(\Dt)^{\pe+1}
\\&+C_3\left(\frac{\Dt}\dt\right)^{\pe}\sum_{k=1}^K\|\overline{\Q}_{[L]}\big(\X^{t^{i,0},\Y_{[L]}^{i,0}}(t^{i,k})\big)-\overline{\Q}_{[L]}\big( \Y_{[L]}^{i,k}\big)\|
\\
=&
\|\E\tu_{[L]}\big(t^{i,K},\Y_{[L]}^{i,K};t^{i+1,0}\big)-\E\tu_{[L]}\big(t^{i,K},\X^{t^{i,0},\Y_{[L]}^{i,0}}(t^{i,K});t^{i+1,0}\big)\|
+C_2(\Dt)^{\pe+1}
\\&+C_3\left(\frac{\Dt}\dt\right)^{\pe}\sum_{k=1}^K\|\left(\E g_l\big(\X^{t^{i,0},\Y_{[L]}^{i,0}}(t^{i,k})\big)-\E g_l\big( \Y_{[L]}^{i,k}\big)\right)_{l=1}^L\|,
\end{align*}
where we made also use of $\X^{t^{i,0},\Y_{[L]}^{i,0}}(t^{i+1,0})=\X^{t^{i,K},\X^{t^{i,0},\Y_{[L]}^{i,0}}(t^{i,K})}(t^{i+1,0})$.
Due to the consistency of $\os$, altogether we obtain
\begin{align}
|\E g\big(s,\X^{t^{i,K},\Y_{[L]}^{i,K}}(t^{i+1,0})\big)-\E g\big(s,\Y_{[L]}^{i+1,0}\big)|
\leq C_L\Dt+\tC_L(\Dt)^{\pe}\dt^{\pos}+\tC_L\dt^{\pos+1}\label{eq:KonsuEnsembleGen}
\end{align}
uniformly w.\,r.\,t.\ $s \in [t^0, t^{{\tind}+1}]$ for some constants $C_L$ and $\tC_L$ with $C_L\to0$ for $L\to\infty$.
For ease of notation, in the following we will neglect the $L$-dependency of $Y$.
Then
\begin{align*}
&{\E f\big(\X^{t^0,X_0}(t^{{\tind}+1})\big)-\E f\big(\Y^{{\tind}+1,0}\big)}\\
=&\sum_{i=0}^{{\tind}}\sum_{k=0}^{K-1}
\left(\E f\big(\X^{t^{i,k},\Y^{i,k}}(t^{{\tind}+1})\big)-\E f\big(\X^{t^{i,k+1},\Y^{i,k+1}}(t^{{\tind}+1})\big)\right)\\
&+\sum_{i=0}^{{\tind}-1}\left(\E f\big(\X^{t^{i,K},\Y^{i,K}}(t^{{\tind}+1})\big)-\E f\big(\X^{t^{i+1,0},\Y^{i+1,0}}(t^{{\tind}+1})\big)\right)
\\
&+\E f\big(\X^{t^{{\tind},K},\Y^{{\tind},K}}(t^{{\tind}+1})\big)-\E f\big(\Y^{{\tind}+1,0}\big).
\end{align*}
Making use of $\X^{t^{i,k},\Y^{i,k}}(t^{{\tind}+1})=\X^{t^{i,k+1},\X^{t^{i,k},\Y^{i,k}}(t^{i,k+1})}(t^{{\tind}+1})$ and combining the last two summands we obtain
\begin{align*}
&{\E f\big(\X^{t^0,X_0}(t^{{\tind}+1})\big)-\E f\big(\Y^{{\tind}+1,0}\big)}\\
=&\sum_{i=0}^{{\tind}}\sum_{k=0}^{K-1}
\left(\E f\big(\X^{t^{i,k+1},\X^{t^{i,k},\Y^{i,k}}(t^{i,k+1})}(t^{{\tind}+1})\big)-\E f\big(\X^{t^{i,k+1},\Y^{i,k+1}}(t^{{\tind}+1})\big)\right)\\
&+\sum_{i=0}^{{\tind}}\left(\E f\big(\X^{t^{i,K},\Y^{i,K}}(t^{{\tind}+1})\big)-\E f\big(\X^{t^{i+1,0},\Y^{i+1,0}}(t^{{\tind}+1})\big)\right).
\end{align*}
Using $\X^{t^{i,K},\Y^{i,K}}(t^{{\tind}+1})=\X^{t^{i+1,0},\X^{t^{i,K},\Y^{i,K}}(t^{i+1,0})}(t^{{\tind}+1})$ and the definition of $g$ this implies
\begin{align*}
&{\E f\big(\X^{t^0,X_0}(t^{{\tind}+1})\big)-\E f\big(\Y^{{\tind}+1,0}\big)}\\
=&\sum_{i=0}^{{\tind}}\sum_{k=0}^{K-1}
\left(\E g\big(t^{i,k+1},\X^{t^{i,k},\Y^{i,k}}(t^{i,k+1})\big)-\E g\big({t^{i,k+1},\Y^{i,k+1}}\big)\right)\\
&+\sum_{i=0}^{{\tind}}\left(\E f\big(\X^{t^{i+1,0},\X^{t^{i,K},\Y^{i,K}}(t^{i+1,0})}(t^{{\tind}+1})\big)-\E f\big(\X^{t^{i+1,0},\Y^{i+1,0}}(t^{{\tind}+1})\big)\right)
\\
=&\sum_{i=0}^{{\tind}}\sum_{k=0}^{K-1}
\left(\E g\big(t^{i,k+1},\X^{t^{i,k},\Y^{i,k}}(t^{i,k+1})\big)-\E g\big({t^{i,k+1},\Y^{i,k+1}})\big)\right)\\
&+\sum_{i=0}^{{\tind}}\left(\E g\big(t^{i+1,0},\X^{t^{i,K},\Y^{i,K}}(t^{i+1,0})\big)-\E g\big({t^{i+1,0},\Y^{i+1,0}}\big)\right).
\end{align*}
Thus, \eqref{eq:Konsuuniform} and \eqref{eq:KonsuEnsembleGen} imply
\begin{align*}
&|\E f\big(\X^{t^0,X_0}(t^{{\tind}+1})\big)-\E f\big(\Y_{[L]}(t^{{\tind}+1})\big)|
\\
\leq&\sum_{i=0}^{{\tind}}\sum_{k=0}^{K-1}\E C_g(\Y_{[L]}^{i,k})\dt^{\pos+1}
+\sum_{i=0}^{{\tind}}\left(
C_L\Dt+\tC_L(\Dt)^{\pe}\dt^{\pos}+\tC_L\dt^{\pos+1}
\right),
\end{align*}
which yields together with condition (\ref{St-lg-cond2}) the desired convergence.
\end{proof}

\subsection{A result using multistep state extrapolation\label{sec:conv-msem}}

For multistep state extrapolation, the analysis is complicated by the multistep nature of the method.
In this subsection we therefore restrict ourselves to consider linear SDEs \eqref{eq:linSDE}
with normally distributed initial values and restriction operator
\begin{equation}\label{eq:restrictideal}
\overline{\Q}(Y)=\begin{pmatrix}
\E Y\\
\Var Y
\end{pmatrix}.
\end{equation}
If we assume now equidistant coarse time steps and that the extrapolation step is given by \eqref{eq:MSextrap}, then we obtain
\begin{align*}
\E \Y^{{\tind}+1,0}=\sum_{s=0}^{\pe}l_s(\beta)\E \Y^{{\tind}-s,K},\quad \Var \Y^{{\tind}+1,0}=\sum_{s=0}^{\pe}l_s(\beta)\Var \Y^{{\tind}-s,K}
\end{align*}
with $l_s$ and $\beta$ given in \eqref{eq:lsm} and \eqref{eq:beta}. Application of the one step method $\os$ to \eqref{eq:linSDE} yields
\begin{align}\label{eq:osmssexlinsde}
\Y^{{\tind}+1,K}=&\hRos(a_1,t^{{\tind}+1,K-1},\dt,\eta^{{\tind}+1,K-1})\Y^{{\tind}+1,K-1}
\nonumber\\
&+\hSos(a_1,a_2,b,t^{{\tind}+1,K-1},\dt,\eta^{{\tind}+1,K-1})\\\nonumber
=&\prod_{k=0}^{K-1}\hRos(a_1,t^{{\tind}+1,k},\dt,\eta^{{\tind}+1,k})\Y^{{\tind}+1,0}
\\&+\sum_{k=0}^{K-1}\hSos(a_1,a_2,b,t^{{\tind}+1,k},\dt,\eta^{{\tind}+1,k})\prod_{i=k}^{K-1}\hRos(a_1,t^{{\tind}+1,i},\dt,\eta^{{\tind}+1,i}),\nonumber
\end{align}
where $\hRos$ and $\hSos$ are functions depending on $\os$, similar to the stability function in the deterministic case, and $\eta^{{\tind}+1,k}$ are (vectors of) i.\,i.\,d.\ random variables used by $\os$. Assuming that $\hRos(a_1,t^{{\tind}+1,k},\dt,\eta^{{\tind},k})$ is independent of $\eta^{{\tind},k}$, which holds, e.\,g., for typical Runge-Kutta methods, we obtain the multistep formulas
\begin{align}\nonumber
\E \Y^{{\tind}+1,K}=&\prod_{k=0}^{K-1}\hRos(a_1,t^{{\tind}+1,k},\dt)\sum_{s=0}^{\pe}l_s(\beta)\E \Y^{{\tind}-s,K}
\\&+\sum_{k=0}^{K-1}\E\hSos(a_1,a_2,b,t^{{\tind}+1,k},\dt,\eta^{{\tind}+1,k})\prod_{i=k}^{K-1}\hRos(a_1,t^{{\tind}+1,i},\dt),\label{eq:msemlinsdeexp}\\
\Var \Y^{{\tind}+1,K}=&\prod_{k=0}^{K-1}\hRos(a_1,t^{{\tind}+1,k},\dt)^2\sum_{s=0}^{\pe}l_s(\beta)\Var \Y^{{\tind}-s,K}\nonumber
\\&+\sum_{k=0}^{K-1}\Var\hSos(a_1,a_2,b,t^{{\tind}+1,k},\dt,\eta^{{\tind}+1,k})\prod_{i=k}^{K-1}\hRos(a_1,t^{{\tind}+1,i},\dt)^2.\label{eq:msemlinsdevar}
\end{align}
As due to the consistency of $\os$ we have $\hRos(a_1,t^{{\tind}+1,k},0)=1$ and \[\E\hSos(a_1,a_2,b,t^{{\tind}+1,k},0,\eta^{{\tind}+1,k})=\Var\hSos(a_1,a_2,b,t^{{\tind}+1,k},0,\eta^{{\tind}+1,k})=0,\]
the corresponding characteristic polynomial is given for both equations by
\begin{equation*}
P(\xi;\beta,{\pe})=\xi^{{\pe}+1}-\sum_{s=0}^{\pe}l_s(\beta)\xi^s.
\end{equation*}
As in the deterministic case (theory of linear multistep methods) we then obtain the following theorem.
\begin{theorem}
Assume that all roots $\xi$ of $P(\xi;\beta,{\pe})=0$ lie within the unit circle and that all roots with absolute value one are simple. If then the one-step method is weakly consistent of order $\pos$ and given by \eqref{eq:osmssexlinsde} with $\Ros$ independent of $\eta$, then coarse multistep state extrapolation with restriction operator \eqref{eq:restrictideal} and extrapolation given by \eqref{eq:MSextrap} is  convergent of order $p=\min\{\pe,\pos\}$ for linear SDEs \eqref{eq:linSDE} with normally distributed initial values.
\end{theorem}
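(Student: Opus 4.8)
The plan is to reduce the statement to the classical convergence theory of linear multistep methods. The key point is that, for a linear SDE \eqref{eq:linSDE} started from a normal distribution, the law of $X(t)$ is Gaussian for all $t$ and is therefore completely encoded by the pair $\big(\E X(t),\Var X(t)\big)$, whose evolution closes into the two scalar ODEs \eqref{eq:linear-closed}. I would first show that this closure survives \cref{algo:accel}: by \cref{lem:normproject} and the two corollaries following it, the matching operator \eqref{eq:AnsatzVertBerechideal} associated with the restriction \eqref{eq:restrictideal} reconstructs \emph{exactly} the normal distribution with the prescribed mean and variance, so every intermediate $\Y^{{\tind},k}$ produced by the algorithm is Gaussian and is fully described by $\big(\E\Y^{{\tind},k},\Var\Y^{{\tind},k}\big)$. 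Consequently the whole algorithm collapses onto a pair of real-valued recursions for the mean and the variance, the restriction introduces no approximation error, and one may take the term $C_L$ in the definition of convergence identically zero.

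Next I would identify these recursions as a linear multistep discretisation of \eqref{eq:linear-closed}. A macroscopic step is the composition of $K$ applications of the one-step map --- which, using the assumed independence of $\hRos$ from the driving noise, act affinely on $\big(\E\Y,\Var\Y\big)$ through the factors $\prod_k\hRos$, $\prod_k\hRos^2$ and the additive terms of \eqref{eq:msemlinsdeexp}--\eqref{eq:msemlinsdevar} --- followed by the extrapolation \eqref{eq:MSextrap}, i.\,e.\ the value at the fraction $\beta$ of \eqref{eq:beta} of the degree-$\pe$ polynomial interpolating the endpoints of the previous $\pe+1$ bursts. Both the mean- and the variance-sequence then obey a $(\pe+1)$-step recursion whose homogeneous part has characteristic polynomial $P(\xi;\beta,\pe)=\xi^{\pe+1}-\sum_{s=0}^{\pe}l_s(\beta)\xi^{s}$, so the hypothesis on its roots is exactly the Dahlquist zero-stability (root) condition. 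For consistency I would apply the weak-consistency estimate of order $\pos$ to the test functions $g(x)=x$ and $g(x)=x^2$ --- both in $C_P^{0,2(\pos+1)}(\I\times\R,\R)$ --- to see that one micro-step advances the mean and the second moment, hence the variance, of the linear test equation with local error $O(\dt^{\pos+1})$, i.\,e.\ it reproduces the flow of \eqref{eq:linear-closed} to order $\pos$; combined with the order-$\pe$ consistency of the interpolatory extrapolation this makes the macroscopic step consistent of order $p=\min\{\pe,\pos\}$. Zero-stability plus consistency of order $p$, together with a sufficiently accurate starting procedure, then give via the standard linear-multistep convergence theorem (see, e.\,g., \cite[Section III.2]{HairerWanner} and \cite{SOMMEIJER:1990p2657}) that $\big(\E\Y^{{\tind},0},\Var\Y^{{\tind},0}\big)\to\big(\E X(t),\Var X(t)\big)$ with order $p$, uniformly for $t\in\IDt$.

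Finally I would pass back from the two moments to the functional. For $f\in C_P^{2(p+1)}(\R,\R)$ and a Gaussian variable with parameters $(\mu,\sigma^2)$, $\E f$ is a smooth, at-most-polynomially-growing function of $(\mu,\sigma^2)$ --- write $\E f=\int_{\R}f(\sigma z+\mu)\,e^{-z^2/2}/\sqrt{2\pi}\,dz$ and differentiate under the integral sign, exactly as in the proof of the continuity corollary after \cref{lem:normproject} --- so $|\E f(\Y(t))-\E f(X(t))|$ is bounded by a constant times $\big\|\big(\E\Y^{{\tind},0},\Var\Y^{{\tind},0}\big)-\big(\E X(t),\Var X(t)\big)\big\|$, which is $O\big((\Dt)^{p}\big)$ by the previous step; the uniform moment bounds needed on the way (the analogue of hypothesis (ii) of the previous theorem) follow for free because zero-stability propagates the bounded Gaussian data. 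The main obstacle is the identification carried out in the second paragraph: one must verify carefully that the $K$-fold composition of the micro-step with the extrapolation genuinely is a linear multistep method with the stated characteristic polynomial and order, while handling the facts that the mean and variance recursions carry \emph{different} amplification factors ($\hRos$ versus $\hRos^2$) and time-dependent, noise-averaged inhomogeneous terms. The clean way to deal with this is to regard both recursions as perturbations of the constant-coefficient recursion with symbol $P(\xi;\beta,\pe)$ --- the amplification factors differ from $1$ by $O(\dt)$ by consistency of $\os$ and hence do not disturb zero-stability --- and to invoke the convergence theory for linear multistep methods with smoothly varying coefficients and inhomogeneities rather than the textbook constant-coefficient version.
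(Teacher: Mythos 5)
Your proposal is correct and follows essentially the same route the paper takes: the paper's argument consists of the derivation of the mean/variance recursions \eqref{eq:msemlinsdeexp}--\eqref{eq:msemlinsdevar}, the observation that by consistency of $\os$ the characteristic polynomial is $P(\xi;\beta,\pe)$, and then a direct appeal to classical linear-multistep convergence theory, with the theorem stated as an immediate consequence. Your write-up simply fills in the details the paper leaves implicit --- exactness of the Gaussian matching, treatment of the different amplification factors $\hRos$ versus $\hRos^2$ as $O(\dt)$-perturbations of the constant-coefficient recursion, and the final reduction of $|\E f(\Y)-\E f(X)|$ to the moment errors via the smooth Gaussian-parameter dependence --- all in the spirit of the paper's sketch.
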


\section{Numerical results\label{sec:numerical}}

In this section, we provide further numerical results.  We first illustrate the dependence of the local error in one accelerated time step on the step size (\cref{sec:num-extrapol}). Subsequently, we perform a number of long-term simulations (\cref{sec:num-long}).

Below, we consider \cref{eq:fene-3d} in one space dimension, with  $F(X)$ the FENE force \eqref{eq:springs} with $\fp=49$, $\We=1$. As the velocity field, we choose $\kappa(t)= 2\cdot(1.1+\sin\left(\pi t)\right)$, and we again sample the initial states from the invariant distribution of \cref{eq:fene-3d} for $\kappa(t)\equiv 0$, and use $\epsilon=1$ in \eqref{eq:kramers}. We discretize in time with the classical Euler-Maruyama scheme with time step $\dt=2\cdot 10^{-4}$. As before, the macroscopic state $\U_{[L]}$ consists of the first $L$ \emph{even} centralized moments.  We study the micro/macro acceleration algorithm with first order projective extrapolation, and with first and second order multistep state extrapolation. In all cases, we perform $K=1$ microscopic steps before extrapolation.

\subsection{Local error\label{sec:num-extrapol}}
We simulate $\Eind=1\cdot 10^5$ realizations up to time $t^*=1.6$, and record the microscopic state $\bY^{-}$ at time $t^-=1.4$, as well as the  macroscopic states $\U_{[L]}(t^-+\Dt)$ for $L=3,4,5$, and the approximated function of interest $\tilde{\tau}_p(t^-+\Dt)$ for $\Dt \in [0,t^*-t^-]$. We then extrapolate from time $t^-$ to time $t=t^-+\Dt$, and project the ensemble $\bY^{-}$ onto  $\U_{[L]}(t^-+\Dt)$.   Subsequently, we compute the corresponding value of the stress as $\hat{\tau}_p(t^-+\Dt)$. We record the relative error with respect to the reference solution, $|\hat{\tau}_p(t)-\tilde{\tau}_p(t)|/\tilde{\tau}_p(t)$, as a function of $\Dt$.
To reduce the statistical error, we report the averaged results of $50$ realizations of this experiment. Then, the statistical error has an order of magnitude of about $10^{-5}$. The results are shown in \cref{fig:extrap-many}.
\begin{figure}
\begin{center}
	\includegraphics[width=0.7\linewidth]{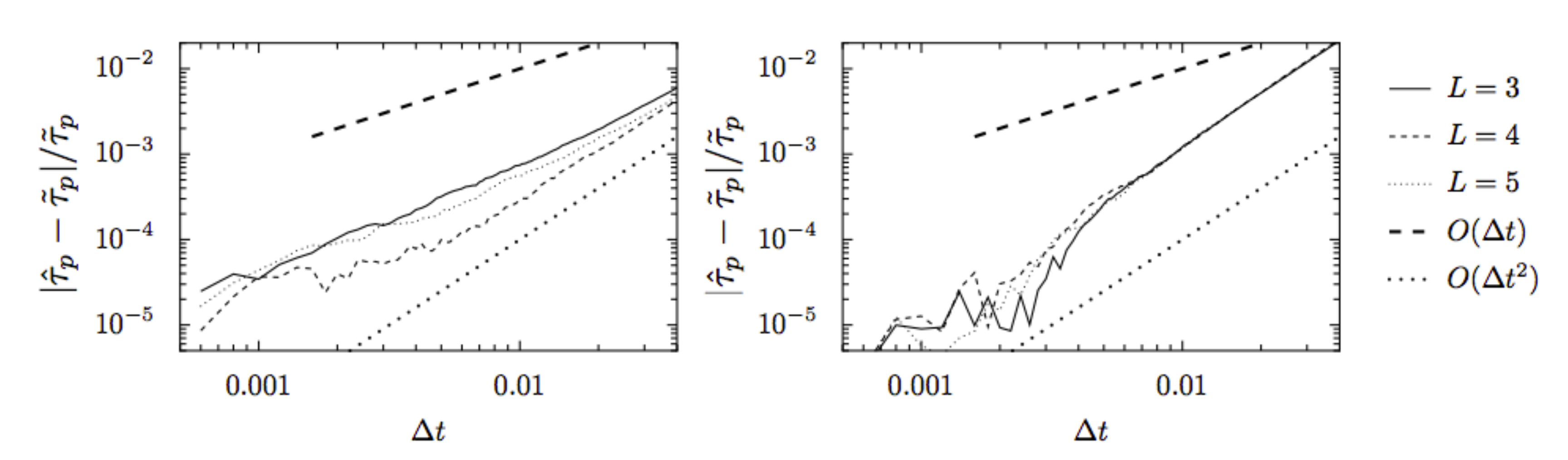}
	\end{center}
\caption{\label{fig:extrap-many}Error of the stress after extrapolating and projecting a prior ensemble of $J=1\cdot 10^5$ FENE dumbbells onto the first $L$ even centralized moments of a reference ensemble, as a function of $\Dt$ for several values of $L$. Displayed is the result averaged over $50$ realizations of the experiment. Left: First order projective extrapolation. Right: First order multistep state extrapolation. Simulation details are given in the text.}
\end{figure}

For projective integration, we clearly see a first order behavior as a function of $\Dt$; this is a consequence of the amplification of the statistical error during projective extrapolation.  Note that, due to the presence of three competing sources of errors (extrapolation, matching, and statistical error), which may be of opposite signs, the effect of extrapolating more macroscopic state variables is not so clearly visible as in \cref{fig:projection-Dt}. Note that for $L=4$ the statistical and matching errors seem to be a bit lower, such that the second order behavior of the extrapolation error is already apparent for the largest displayed time steps.

For multistep state extrapolation, the situation is slightly different.  Here, we see that, for modest gains (small $\Dt$), the statistical error remains more or less unaffected. For $\Dt>2\cdot 10^{-3}$ (gain factor $10$), however, the error increases as $\Dt^2$, as a consequence of the large extrapolation error.  Note that, as soon as the extrapolation error dominates, the error appears to be independent of the number of moments used.

To emphasize the effect of the statistical error, we repeat the experiment using $\Eind=1000$ realizations (averaged over $20$ realizations of the experiment). The results are shown in \cref{fig:extrap-less}.
\begin{figure}
\begin{center}
	\includegraphics[width=0.7\linewidth]{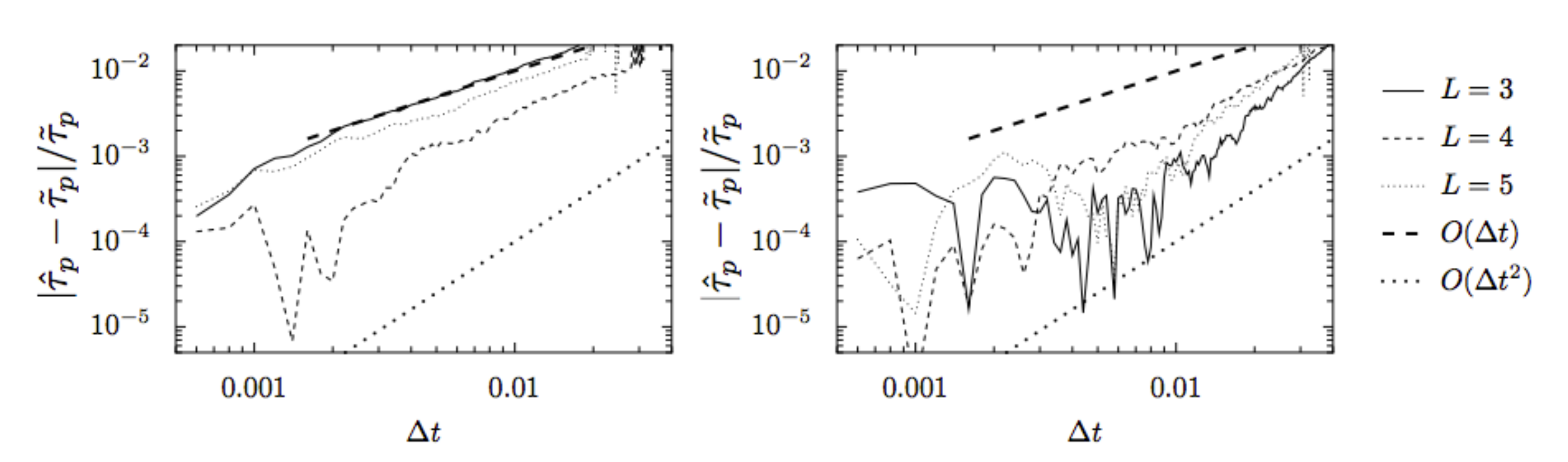}
	\end{center}
\caption{\label{fig:extrap-less} Error of the stress after extrapolating and projecting a prior ensemble of $J=1\cdot 10^3$ FENE dumbbells onto the first $L$ even centralized moments of a reference ensemble, as a function of $\Dt$ for several values of $L$. Displayed is the result averaged over $20$ realizations of the experiment. Left: First order projective extrapolation. Right: First order multistep state extrapolation. Simulation details are given in the text.}
\end{figure}
Compared to \cref{fig:extrap-many}, we see qualitatively the same behavior. Again, the error of projective extrapolation increases linearly (but it is now an order of magnitude larger), while, for multistep state extrapolation, larger gains appear to be possible since the statistical error now dominates for a wider range of extrapolation step sizes $\Dt$.

\subsection{Long-term simulation\label{sec:num-long}}

We now turn to a long-term simulation,  and compare the behavior of the sample mean and sample standard deviation of a full microscopic simulation (which we will call the reference simulation) with the micro/macro acceleration algorithm.
We denote by $\tilde{\tau}_p(t)$ the approximation to the function of interest calculated from one realization of the reference simulation using $J$ SDE realizations, and by $\hat{\tau}_p(t)$ the function of interest obtained via one realization of the micro/macro acceleration technique.
As extrapolation techniques, we use first order projective extrapolation and second order multistep state extrapolation. (For the set-up in this example, the deterministic error of first order multistep state extrapolation is too high to be considered further.)

\Cref{fig:stat-fene-proj-mom} (top left) shows the evolution of the stress as a function of time.  We see that the simulation exhibits a periodic behavior, with a fast increase of the stress followed by a relaxation.  Note that, in this problem, due to the fast variations in $\kappa(t)$, there is not a very strong time-scale separation between the evolution of the stress tensor and the evolution of individual polymers.  Hence, using the time-step adaptation strategy (see \Cref{sec:proj-failure}) will prove crucial to obtain an efficient algorithm.   During the fast increase of the stress, we observed that the matching operator fails for time steps $\Delta t>1\cdot 10^{-3}$, whereas larger accelerations are possible during the relaxation. Therefore, in this experiment we will use adaptive macroscopic time steps, as outlined in \cref{sec:proj-failure}, choosing $\underline{\alpha}=0.2$ and $\overline{\alpha}=1.2$.  On average, we obtained a speed-up factor of $4$, meaning that microscopic simulation has been performed over $1/4$ of the time domain.

In a first experiment, we use  $\Delta t=1\cdot 10^{-3}$ and vary the number $L$ of macroscopic state variables.  The results for 500 realizations of this experiment, each with an ensemble size of $J=5000$, are shown in \cref{fig:stat-fene-proj-mom}.

\begin{figure}
\begin{center}
	\includegraphics[width=\linewidth]{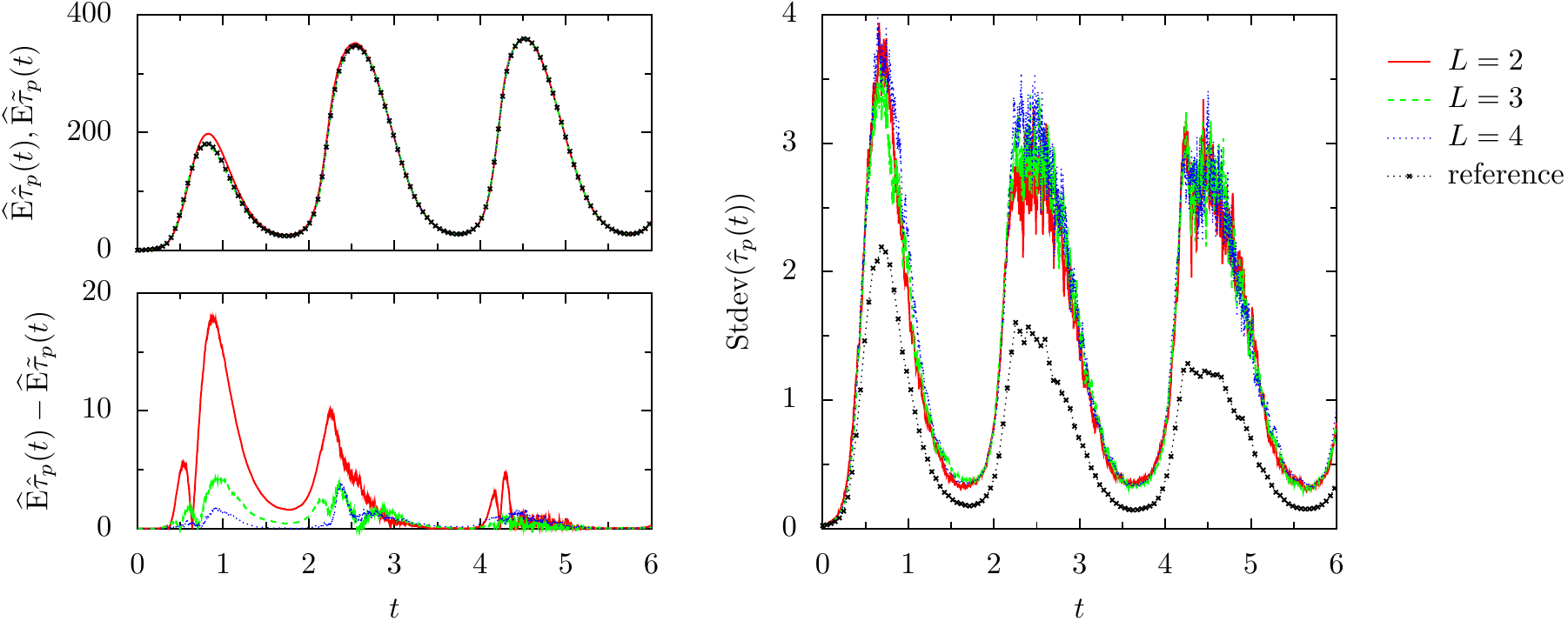}
	\end{center}
\caption{\label{fig:stat-fene-proj-mom}Results of micro/macro acceleration of the FENE model \eqref{eq:fene-3d} using $\Delta t=1\cdot 10^{-3}$ and projective extrapolation for different numbers $L$ of macroscopic state variables, as well as a full microscopic (reference) simulation.  Top left: evolution of the sample means of the stresses $\tilde{\tau}_p$ and $\hat{\tau}_p$. Bottom left: deterministic error on $\hat{\tau}_p$. Right: evolution of the sample standard deviation of $\hat{\tau}_p$. Simulation details are given in the text.}
\end{figure}
We make two main observations.  First, the deterministic error decreases with increasing $L$, whereas the sample standard deviation is independent of $L$. (The different lines in the plot are nearly indistinguishable.) Note also that the variance on the sample standard deviation is quite large in this example. Second, from \cref{fig:stat-fene-proj-mom}, we see that the error of the micro/macro acceleration algorithm with respect to the reference simulation also decreases as a function of time, until it reaches a level of the order of the statistical error.  This behavior can be attributed to the fact that, in this example, the macroscopic behavior of the system on long time scales is determined by only a few macroscopic state variables. The results for multistep state extrapolation (not shown) in terms of $L$ are similar.

In a second experiment, we fix $L=3$ and consider varying $\Delta t$.  (This experiment and its conclusions closely resemble the one in \cref{sec:num-extrap}.) The results for $500$ realizations, each with an ensemble size of $J=1000$, are shown in \cref{fig:stat-fene-proj}.
\begin{figure}
\begin{center}
	\includegraphics[width=\linewidth]{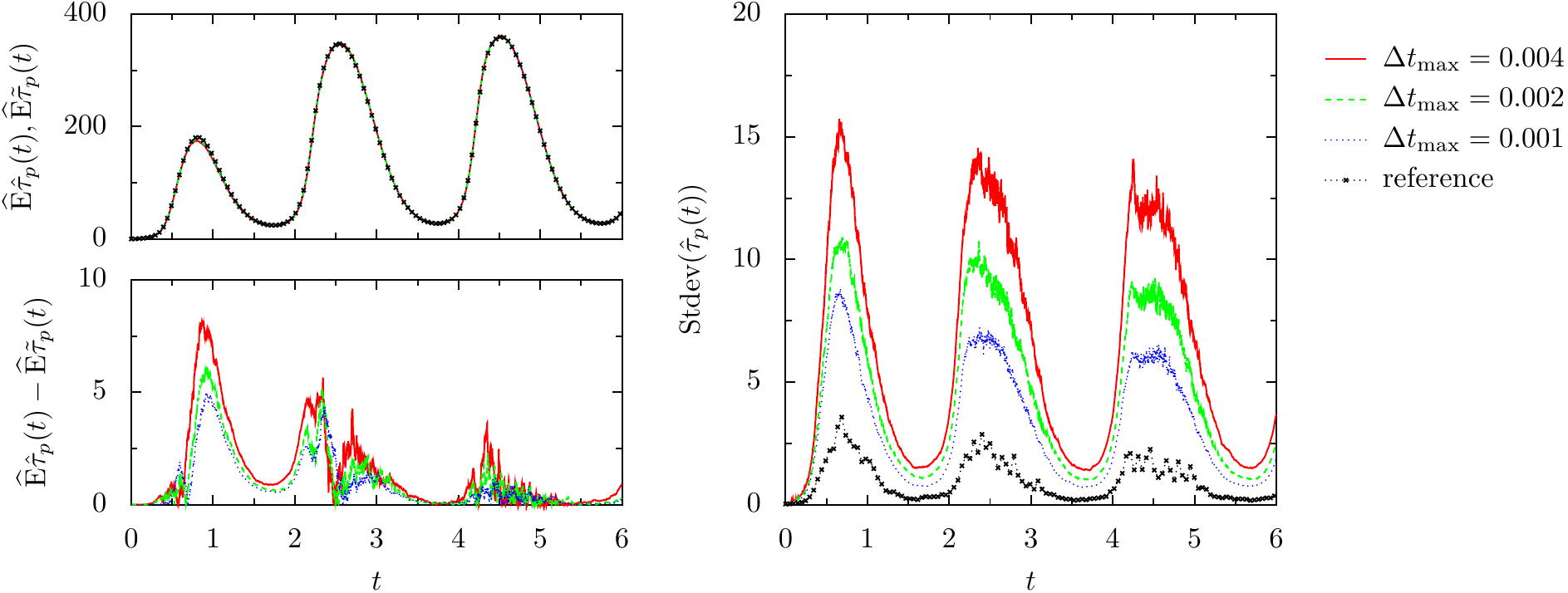}
	\end{center}
\caption{\label{fig:stat-fene-proj}Results of micro/macro acceleration of the FENE model \eqref{eq:fene-3d} using $L=3$ moments and projective extrapolation for different values of $\Delta t$, as well as a full microscopic (reference) simulation.   Top left: evolution of the sample means of the stresses $\tilde{\tau}_p$ and $\hat{\tau}_p$. Bottom left: deterministic error on $\hat{\tau}_p$. Right: evolution of the sample standard deviation of $\hat{\tau}_p$. Simulation details are given in the text.}
\end{figure}
\Cref{fig:stat-fene-proj} (bottom left) shows again that the deterministic error grows with increasing $\Delta t$, whereas the right figure illustrates that the sample standard deviation is larger for larger $\Delta t$. For second order multistep state extrapolation, we obtain \cref{fig:stat-fene-msem-2}.
\begin{figure}
\begin{center}
	\includegraphics[width=\linewidth]{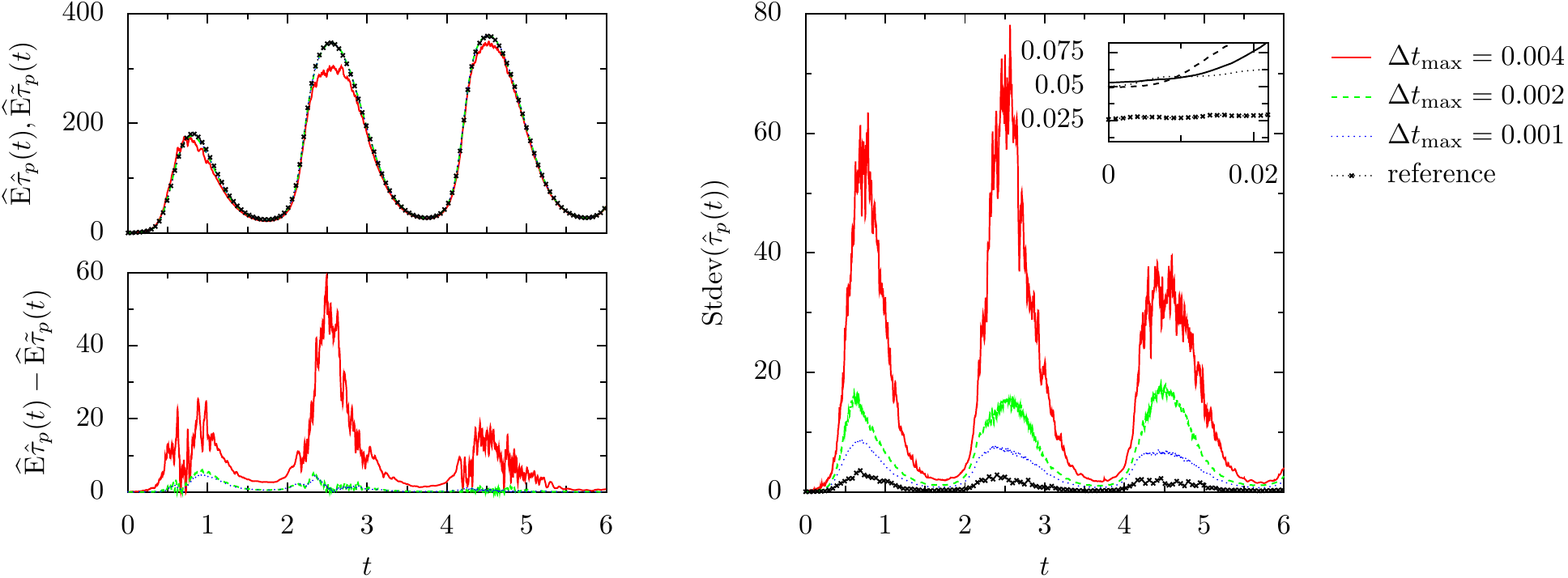}
	\end{center}
\caption{\label{fig:stat-fene-msem-2}Results of micro/macro acceleration of the FENE model \eqref{eq:fene-3d} using $L=3$ moments and second order multistep state extrapolation for different numbers $L$ of macroscopic state variables, as well as a full microscopic (reference) simulation.   Top left: evolution of the sample means of the stresses $\tilde{\tau}_p$ and $\hat{\tau}_p$. Bottom left: deterministic error on $\hat{\tau}_p$. Right: evolution of the sample standard deviation of $\hat{\tau}_p$. Simulation details are given in the text.}
\end{figure}
The behavior of the  sample standard deviation  is similar to the linear case. When zooming in to the behavior for small $t$, we observe that, over a short time interval, the  sample standard deviation  using the micro/macro acceleration technique increases at the same rate as the  sample standard deviation  in the reference simulation, which corresponds to the theoretical result on the local propagation of statistical error. However, on longer time scales, we again see that the  sample standard deviation for large $t$ grows rapidly, and seems to be larger for larger $\Delta t$, due to accumulation effects. Note that, in this case, second order multistep state extrapolation even behaves worse than first order projective integration on long time scales for sufficiently large $\Delta t$.

\section{Conclusions and outlook\label{sec:concl}}

We presented and analyzed a micro/macro acceleration technique for the Monte Carlo simulation of stochastic differential equations (SDEs) in which short bursts of simulation using an ensemble of microscopic SDE realizations are combined with an extrapolation of an estimated macroscopic state forward in time.  The method is designed for problems in which the required time step for each realization of the SDE is small compared to the time scales on which the function of interest evolves.  For such systems, one often needs to take a very small microscopic time step, which results in a deterministic error that is much smaller than the statistical error.

We showed that the proposed procedure converges in the absence of statistical error, provided the matching operator satisfies a number of natural conditions, and we introduced a matching operator that satisfies these conditions for Gaussian random variables.  We also conjectured that this matching operator is suitable for general distributions, and provided numerical evidence to support this conjecture.
Concerning the statistical error, a local analysis of projective extrapolation shows that the amplification of statistical error depends on the ratio $\alpha$ of macroscopic (extrapolation) and microscopic (simulation) time steps, while this is not the case for multistep state extrapolation. Numerical evidence, however, suggests that, when using higher order multistep state extrapolation, accumulation of statistical error over macroscopic time scales may nevertheless induce an $\alpha$-dependent statistical error.

This paper has not focused on quantifying the computational gains that can be expected from this method.  It is clear from the description that the method will be more efficient when there is a bigger separation in time scales between the microscopic and macroscopic levels.  The numerical examples in this text do not exhibit such a strong time-scale separation; they were mainly chosen for their ability to clearly illustrate the effects of the different sources of numerical error.  However, some conclusions on efficiency can nevertheless be drawn. For a given required variance on the solution, the computational cost using first order projective extrapolation is comparable to that of a full simulation, since the former requires more SDE realizations due to the $\alpha$-dependent amplification of statistical error. For first order multistep state extrapolation, extrapolation without such a drastic amplification of statistical error is possible, at the cost of an amplified deterministic error. This can be acceptable if the macroscopic function of interest changes slowly compared to the time step of the SDE.

We note that, for the model problem of coupled micro/macro simulation of dilute polymer solutions, the amplification of statistical error using projective extrapolation need not be dramatic: while the computational cost of a micro/macro accelerated simulation and a full microscopic simulation are comparable for given variance, this is no longer true when coupling this Monte Carlo simulation to a PDE for the solvent.  Indeed, when extrapolating the complete coupled system forward in time, a computational gain is obtained since the PDE for the solvent also does not need to be simulated on the whole time domain.

In future work, we will study stability and propagation of statistical error on long time scales. The numerical experiments indicate that these issues can be studied in a linear setting. Another open question is for which distributions the conjecture can be proved.  From an algorithmic point of view, this work raises questions on the adaptive/automatic selection of all method parameters (number of moments to extrapolate, macroscopic time step, number of SDE realizations) to ensure a reliable computation with minimal computational cost.  Also, a numerical comparison with other approaches, such as implicit approximations, could be envisaged.

\end{document}